\newtheorem{theorem}{Theorem}[subsection]
\newtheorem{lemma}[theorem]{Lemma}
\newtheorem{proposition}[theorem]{Proposition}
\theoremstyle{definition}
\newtheorem{example}[theorem]{Example}
\newtheorem{remark}[theorem]{Remark}
\theoremstyle{plain}
\newtheorem{theor}{Theorem}
\theoremstyle{plain}
\newtheorem{coro}{Corollary}
\numberwithin{equation}{subsection}
\title{Explicit construction of a plane sextic model\\ for genus-five Howe curves, II
}
\date{\today}
\author{Momonari Kudo}
\begin{document}


%


\maketitle

\begin{abstract}
    A {\it Howe curve} is defined as the normalization of the fiber product over a projective line of two hyperelliptic curves.
    Howe curves are very useful to produce important classes of curves over fields of positive characteristic, e.g., maximal, superspecial, or supersingular ones.
    Determining their feasible equations explicitly is a basic problem, and it has been solved in the hyperelliptic case and in the non-hyperelliptic case with genus not greater than $4$.
    In this paper, we construct an explicit plane sextic model for non-hyperelliptic Howe curves of genus $5$.
    We also determine the number and type of singularities on our sextic model, and prove that the singularities are generically $4$ double points.
    Our results together with Moriya-Kudo's recent ones imply that for each $s \in \{2,3,4,5\}$, there exists a non-hyperellptic curve $H$ of genus $5$ with $\mathrm{Aut}(H) \supset \mathbf{V}_4$ such that its associated plane sextic has $s$ double points.  
\end{abstract}

\section{Introduction}\label{sec:intro}

{Throughout this paper, we measure the (computational) complexity by the number of arithmetic operations in a field.}
Let $k$ be an algebraically closed field of characteristic $p$ with $p =0$ or $p \geq 5$, and $\mathbb{P}^n$ the projective $n$-space over $k$.
By a curve, we mean a (possibly singular) projective variety of dimension one over $k$.
In this paper, we focus on an important class of curves that are birational to fiber products of hyperelliptic curves, and consider the problem of computing their explicit plane models.
The Jacobian variety of such a curve can be decomposed via isogenies into a product of low-dimensional abelian varieties or the Jacobian varieties of curves of lower genus (cf.\ \cite[Section 3.3]{glass2005hyperelliptic}).
Moreover, such a curve over a finite field is expected to have many rational points with respect to genus, see \cite{howe2016quickly} for the case of genus $4$, and \cite{howe2017curves} for the case of genera $5$, $6$, and $7$.
In \cite{katsura2021decomposed}, Katsura-Takashima named a smooth curve birational to the fiber product of two hyperelliptic curves a {\it generalized Howe curve}.
The precise definition is as follows:
For two hyperelliptic curves $C_1$ and $C_2$ of genera $g_1$ and $g_2$ over $k$ with $0<g_1 \leq g_2$, let $\psi_1 : C_1 \to \mathbb{P}^1$ and $\psi_2 : C_2 \to \mathbb{P}^1$ be usual double covers (hyperelliptic structures), and assume that there is no isomorphism $\varphi: C_1 \to C_2$ with $\psi_2 \circ \varphi = \psi_1$.
Then, we call the desingularization $H$ of the fiber product $C_1 \times_{\mathbb{P}^1} C_2$ over these hyperellitic structures a generalized Howe curve associated with $C_1$ and $C_2$, and in this paper we refer to it simply as a {\it Howe curve}. 
The genus $g$ of $H$ is calculated as $g = 2 (g_1+g_2)+1-r$, where $r$ is the number of ramification points in $\mathbb{P}^1$ common to $C_1$ and $C_2$.
Here, if $g\geq 4$, then $H$ is non-hyperelliptic if and only if $r<g_1+g_2+1$, see \cite[Theorem 2]{katsura2021decomposed}.
For example, a non-hyperelliptic Howe curve of genus $5$ has the parameter set $(g_1,g_2,r) = (2,2,4)$, $(1,2,2)$, $(1,3,4)$, or $(1,1,0)$; in fact, it suffices to consider $(2,2,4)$ and $(1,1,0)$ only, see Section \ref{subsec:defHowe} below for details.

Howe curves are also useful to generate supersingular or superspecial curves (cf.\ \cite{oort1991hyperelliptic}, \cite{kudo2020existence}, \cite{kudo2020algorithms}, \cite{ohashi2022fast}, \cite{moriya2022some}), where a non-singular curve is said to be supersingular (resp.\ superspecial) if its Jacobian variety is isogenous (resp.\ isomorphic) to a product of supersingular elliptic curves.
Isomorphism classes of supersingular or superspecial curves can be determined by using their explicit equations.
More generally, in terms of the classification of curves, it is important to find explicit equations for curves, in particular plane models.
As for Howe curves, if $H$ defined as above is hyperelliptic, then its hyperelliptic equation can be easily computed, see e.g., \cite[Section 2]{moriya2022some}.
In the case where $H$ is non-hyperelliptic, Moriya-Kudo~\cite[Section 4.2]{moriya2022some} found a plane quartic equation for genus-$3$ Howe curves explicitly.
For the case of genus $4$, we refer to \cite[Section 2]{kudo2020algorithms}, where the authors represent $H$ as the complete intersection of a quadric and a cubic in $\mathbb{P}^3$, which is the canonical model of a non-hyperelliptic curve of genus $4$.
Recently, Moriya-Kudo~\cite{MK23} also obtained a plane sextic model for non-hyperelliptic Howe curves of genus $5$ specified by the parameter set $(g_1,g_2,r) = (2,2,4)$, whereas the remaining case $(g_1,g_2,r) = (1,1,0)$ is left unsolved.

In this paper, we determine an explicit plane sextic model for $H$ with $(g_1,g_2,r) = (1,1,0)$.
More specifically, we shall prove the following theorem:

\begin{theor}\label{thm:main1}
    Every non-hyperelliptic Howe curve $H$ of genus five associated with two genus-$1$ curves $C_1 : y_1^2 = (x-\alpha_1)(x-\alpha_2)(x-\alpha_3)(x-\alpha_4)$ and $C_2: y_2^2 = (x- \beta_1)(x-\beta_2)(x-\beta_3)(x-\beta_4)$ with $\alpha_i,\beta_j \in k$ is birational to a plane sextic singular curve $C : f (x,y)= 0$ defined by
    \[
    \begin{aligned}
    f=& (c_{60} x^6 + c_{42} x^4 y^2) + (c_{50} x^5 + c_{32} x^3 y^2) + (c_{40} x^4 + c_{22}x^2 y^2 + c_{04}y^4)\\
    & + (c_{30} x^3 + c_{12}x y^2) + (c_{20} x^2 + c_{02} y^2) + c_{10} x + c_{00},
    \end{aligned}
    \]
    where $c_{ij}$'s are written as polynomials in $\alpha_1, \alpha_2,\alpha_3,\alpha_4,\beta_1, \beta_2, \beta_3, \beta_4$ explicitly.
    Moreover, once $\alpha_1, \alpha_2,\alpha_3,\alpha_4,\beta_1, \beta_2, \beta_3, \beta_4$ are given, all the $13$ coefficients $c_{ij}$ can be computed \textcolor{black}{with operations in a field to which all $\alpha_i$'s and $\beta_j$'s belong, and the number of required operations in the field is bounded by a constant (not depending on the characteristic $p$ of $k$).}
\end{theor}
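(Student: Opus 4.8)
The plan is to build the sextic model by explicitly carrying out the elimination that produces the plane curve birational to $H$, following the same philosophy as in the genus-$4$ case of \cite{kudo2020algorithms} and the $(2,2,4)$ case of \cite{MK23}, but adapted to $(g_1,g_2,r)=(1,1,0)$. First I would set up coordinates: $H$ lives in $C_1\times_{\mathbb{P}^1}C_2$ as the desingularization of the affine variety cut out in $\mathbb{A}^3$ (coordinates $x,y_1,y_2$) by the two equations $y_1^2=\prod_{i=1}^4(x-\alpha_i)$ and $y_2^2=\prod_{j=1}^4(x-\beta_j)$. Since $r=0$, the curves $C_1,C_2$ share no branch point, so $H$ is non-hyperelliptic of genus $5$ by \cite[Theorem 2]{katsura2021decomposed}. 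To get a plane model I would pass to the function field $k(H)=k(x,y_1,y_2)$ and choose the two obvious generators whose symmetry matches the shape of $f$: set $X=x$ (or a suitable affine coordinate on the base $\mathbb{P}^1$) and $Y=y_1 y_2$, noting that $Y^2 = \prod_{i}(x-\alpha_i)\prod_{j}(x-\beta_j)$ is a degree-$8$ polynomial in $x$; this already gives a hyperelliptic-looking relation, so instead I would keep $Y=y_1+y_2$ or, more to the point, choose the pair $(u,v)$ with $u$ a degree-one function in $x$ and $v$ a combination of $y_1,y_2$ of the form $v=a(x)y_1+b(x)y_2$ whose minimal polynomial over $k(x)$ has degree $2$ in $v$ but whose coefficients, after clearing denominators, assemble into the announced polynomial $f$. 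The correct choice is dictated by matching: the ansatz for $f$ has $y$-degree $4$ and $x$-degree $6$, with only even powers of $y$ appearing and a specific ``staircase'' of monomials, which strongly suggests $y$ is taken to be $y_1+y_2$ (so that $y^2=y_1^2+y_2^2+2y_1y_2$ and $y^4$ involves $y_1^2y_2^2$, both rational in $x$), while $x$ is a Möbius-normalized coordinate chosen to reduce the base-$x$ degree from $8$ to $6$.

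The key steps, in order, are: (1) write $y^2 = P(x)+Q(x)+2y_1y_2$ where $P=\prod_i(x-\alpha_i)$, $Q=\prod_j(x-\beta_j)$, so that $(y^2-P-Q)^2 = 4PQ$, giving a single polynomial equation $y^4 - 2(P+Q)y^2 + (P-Q)^2 = 0$ in $k[x,y]$; (2) observe this is already a plane model of $H$ (the map $(x,y_1,y_2)\mapsto(x,y_1+y_2)$ is birational onto its image because $y_1-y_2 = (P-Q)/(y_1+y_2)$ recovers $y_1,y_2$ rationally), but of $x$-degree $8$ in the $y^2$ and $(P-Q)^2$ terms, hence degree $8$, not $6$; (3) reduce the degree by the birational change $x\mapsto$ (a fractional linear transformation) together with a scaling $y\mapsto \lambda(x)\, y$ with $\lambda$ rational, chosen to kill the top two $x$-coefficients and land in the $13$-monomial shape of $f$ — this is where one uses the freedom in $\mathrm{PGL}_2$ acting on the base and the hyperelliptic involutions; (4) read off the $13$ coefficients $c_{ij}$ as the resulting polynomials in $\alpha_1,\dots,\beta_4$, which are manifestly polynomial (no division survives once denominators are cleared, and the fractional-linear normalization can be taken with coefficients that are themselves polynomials in the $\alpha_i,\beta_j$, e.g.\ built from the elementary symmetric functions of the roots). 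The complexity claim then follows immediately: each $c_{ij}$ is a fixed polynomial expression in the $8$ parameters of bounded degree and with a bounded number of terms, so evaluating all of them costs $O(1)$ field operations independent of $p$, and every operation involved (the symmetric-function evaluations, the resultant/elimination, the $\mathrm{PGL}_2$ normalization) stays inside the field generated by the $\alpha_i,\beta_j$ since $p\ge 5$ guarantees the only constants that appear — small integers like $2,4$ and denominators from the normalization — are invertible.

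I expect the main obstacle to be step (3): verifying that there is a choice of base coordinate and of the scaling $\lambda(x)$ that simultaneously (a) drops the $x$-degree from $8$ to exactly $6$, (b) forces all odd-in-$y$ terms to vanish and produces precisely the claimed staircase of $13$ monomials (so that $c_{62},c_{52},c_{54},c_{44}$, etc.\ are genuinely absent), and (c) keeps the transformation defined over the base field with polynomial coefficients in the $\alpha_i,\beta_j$. Concretely, one must check that the two leading $x$-coefficients of the degree-$8$ model can be cleared by a single affine substitution — plausible because the $y^4$ term has a constant leading coefficient while the $y^2$ and constant terms have leading coefficients that are symmetric polynomials one can normalize — and that no common factor is accidentally introduced, which would drop the genus. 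A secondary but routine concern is confirming that the resulting $C$ is still birational to $H$ and not to some quotient; this follows by tracking the degree of the function-field extension through each substitution, but it must be stated carefully. Once step (3) is pinned down, steps (1), (2), and (4) are direct computations and the complexity bound is a formality. The explicit formulas for the $c_{ij}$ would then be recorded (in an appendix or by reference to an accompanying computation) to complete the ``written explicitly'' part of the statement.
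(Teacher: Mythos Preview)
Your steps (1) and (2) are exactly the paper's construction: set $y=y_1+y_2$, square twice, and arrive at
\[
f \;=\; y^4 \;-\; 2\bigl(P(x)+Q(x)\bigr)\,y^2 \;+\; \bigl(P(x)-Q(x)\bigr)^2,
\]
with $P=\phi_1=\prod_i(x-\alpha_i)$ and $Q=\phi_2=\prod_j(x-\beta_j)$. The gap is in your degree count at the end of step (2): you assert this polynomial has total degree $8$ and then devote step (3) to a fractional-linear base change plus a rational rescaling to cut the degree to $6$. But both $P$ and $Q$ are \emph{monic} quartics, so $P-Q$ has $x$-degree at most $3$, and hence $(P-Q)^2$ has $x$-degree at most $6$. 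The middle term $-2(P+Q)y^2$ has total degree $4+2=6$, and $y^4$ has degree $4$. Thus $f$ is already a sextic, with the $x^4y^2$-coefficient equal to $-4$ (coming from the leading $2x^4$ of $P+Q$) and the $y^4$-coefficient equal to $1$. No degree reduction is needed, and the ``main obstacle'' you anticipate in step (3) simply does not arise.

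Once you notice this cancellation, everything in your step (4) goes through immediately: expanding $(P-Q)^2$ and $-2(P+Q)$ in terms of the elementary symmetric functions $\sigma_i$ of the $\alpha$'s and $\tau_i$ of the $\beta$'s yields the thirteen coefficients $c_{ij}$ as explicit polynomials (the paper records them as $c_{60}=(\sigma_1-\tau_1)^2$, $c_{50}=-2(\sigma_1-\tau_1)(\sigma_2-\tau_2)$, etc.), and the constant-time complexity claim is then trivial. The only substantive work remaining is to verify that $f$ is absolutely irreducible, so that $C:f=0$ really is a curve birational to $H$ rather than a union of lower-degree components; the paper handles this by a case analysis on the possible factorization patterns of a sextic of this particular monomial shape (their Lemma~2.3.1). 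Your birationality argument in step (2), recovering $y_1-y_2=(P-Q)/(y_1+y_2)$, is fine and matches the paper's.
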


Unlike a method by Moriya-Kudo~\cite{MK23} for the case $(g_1,g_2,r)=(2,2,4)$, we do not use any resultant computation, but a simpler construction that generalizes Katsura-Takashima's example~\cite[Example 4]{katsura2021decomposed}.
This construction can be extended to the case of Howe curves of other genus, see Remark \ref{rem:gen} below.

We also classify singularities of our plane sextic curve $C$ in Theorem \ref{thm:main1}.
Denoting by $\tilde{C}$ the projective closure in $\mathbb{P}^2$ of $C$, we obtain the following:

\begin{theor}\label{thm:main2}
    The projective curve $\tilde{C}$ has exactly $2$, $3$, or $4$ singularities, all of which are of multiplicity $2$.
    Moreover, it has $4$ double points generically.
\end{theor}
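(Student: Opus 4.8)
The plan is to run the singularity analysis directly on the equation of Theorem~\ref{thm:main1}. By the explicit formulas there, $f$ equals, up to a nonzero scalar, $y^{4}-2(A+B)y^{2}+(A-B)^{2}$, where $A(x)=\prod_{i=1}^{4}(x-\alpha_i)$ and $B(x)=\prod_{j=1}^{4}(x-\beta_j)$; since an overall scalar changes neither the singular locus nor the type of any singularity, I work with this $f$. The two identities I will lean on are $f(x,0)=(A-B)^{2}$ and $f(x,y)\big|_{y^{2}=A+B}=-4AB$, together with the hypothesis $r=0$, which says exactly that $A$ and $B$ are separable and have no common root.

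First I would locate the singularities at infinity. Homogenising gives $F(x,y,z)$ with $F(x,y,0)=x^{4}\bigl(\lambda^{2}x^{2}-4y^{2}\bigr)$, where $\lambda=\sum_j\beta_j-\sum_i\alpha_i$ is the leading coefficient of $A-B$. If $\lambda\neq0$ (equivalently $\deg(A-B)=3$), the points at infinity of $\tilde C$ are $[0{:}1{:}0]$ and $[\pm 2/\lambda{:}1{:}0]$, and the latter two are simple zeros of $\lambda^{2}x^{2}-4y^{2}$, so $\partial F/\partial x\neq0$ there and they are smooth. If $\lambda=0$ the points at infinity are $[0{:}1{:}0]$ and $[1{:}0{:}0]$. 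In the chart $y=1$ the lowest-degree term of $F(x,1,z)$ is a nonzero multiple of $z^{2}$, so $[0{:}1{:}0]$ always has multiplicity $2$ with tangent cone $z^{2}$; solving for the two branches $z=\pm a x^{2}+O(x^{3})$ with $a\neq0$ shows it is a tacnode. When $\lambda=0$, in the chart $x=1$ the lowest-degree term of $F(1,y,z)$ is a nonzero multiple of $y^{2}$, so $[1{:}0{:}0]$ has multiplicity $2$, and its type (node, tacnode, or $A_5$ singularity) is dictated by $\deg(A-B)\in\{2,1,0\}$. Thus $\tilde C$ has exactly one singular point at infinity when $\deg(A-B)=3$ and exactly two when $\deg(A-B)\le2$, all of multiplicity $2$.

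Next I would treat the affine part. From $\partial f/\partial y=4y\bigl(y^{2}-(A+B)\bigr)$, every affine singular point has $y=0$ or $y^{2}=A+B$. On $y^{2}=A+B$ one gets $f=-4AB$, so a point of $C$ there satisfies $AB=0$; a short computation then gives $\partial f/\partial x=-4A'B$ or $-4AB'$ at such a point, which is nonzero because $A,B$ are separable and coprime, so no affine singularities occur off $y=0$. On $y=0$, $f(x,0)=(A-B)^{2}$, so a point $(\rho,0)$ of $C$ has $(A-B)(\rho)=0$, and then $\partial f/\partial x\big|_{(\rho,0)}=2(A-B)(A'-B')|_{\rho}=0$ automatically; hence the affine singular locus is precisely $\{(\rho,0):(A-B)(\rho)=0\}$. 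If $\rho$ is a root of $A-B$ of multiplicity $m$, then writing $\xi=x-\rho$ the local equation of $C$ is $-4A(\rho)\,y^{2}+c\,\xi^{2m}+(\text{higher order})$ with $A(\rho)\neq0$ (otherwise $\rho$ would be a common root of $A$ and $B$) and $c\neq0$; so $(\rho,0)$ has multiplicity $2$, being a node when $m=1$ and an $A_{2m-1}$ singularity when $m\ge2$, and the number of such points is the number of distinct roots of $A-B$, which lies in $\{1,2,3\}$ when $\deg(A-B)\in\{1,2,3\}$ and is $0$ when $\deg(A-B)=0$.

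Finally I would combine the two counts. If $\deg(A-B)=3$ there is one singular point at infinity and $1$, $2$, or $3$ affine ones, hence $2$, $3$, or $4$ in total; if $\deg(A-B)\le2$ there are two singular points at infinity and $0$, $1$, or $2$ affine ones, again $2$, $3$, or $4$; in all cases every singularity has multiplicity $2$. As a check, the $\delta$-invariants sum to $5$ in each case, consistent with the geometric genus $\binom{5}{2}-5=5$ of $\tilde C$. For generic $\alpha_i,\beta_j$ one has $\sum_i\alpha_i\neq\sum_j\beta_j$, so $\deg(A-B)=3$, and $\mathrm{disc}(A-B)\neq0$, so $A-B$ has three distinct roots; therefore $\tilde C$ has exactly $4$ double points generically. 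I expect the main technical obstacle to be the local analysis at $[0{:}1{:}0]$ and $[1{:}0{:}0]$: identifying the correct tangent cone and, when it degenerates to a double line, computing enough of the Puiseux expansions of the two branches to determine the multiplicity and exact type; the affine analysis, by contrast, is straightforward thanks to the factorisation of $\partial f/\partial y$.
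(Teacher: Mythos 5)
Your proposal is correct and follows essentially the same route as the paper: singular points at infinity are located from $F(x,y,0)=x^4\bigl((\sigma_1-\tau_1)^2x^2-4y^2\bigr)$, affine singularities are forced onto $y=0$ via the factorization $f_y=4y\bigl(y^2-(\phi_1+\phi_2)\bigr)$ and the substitution $f|_{y^2=\phi_1+\phi_2}=-4\phi_1\phi_2$, and the count reduces to the number of distinct roots of $\phi_1-\phi_2$ (the paper's $h_1$, whose root multiplicities it tracks via resultants). The only additions beyond the paper's proof are your finer classification of the singularity types (node, tacnode, $A_{2m-1}$) and the $\delta$-invariant consistency check, both of which are sound.
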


By these theorems and \cite[Theorems 1 and 2]{MK23}, we obtain the following:

\begin{coro}
For each $s \in \{2,3,4,5\}$, there exists a non-hyperelliptic non-singular curve $H$ of genus $5$ with $\mathrm{Aut}(H) \supset \mathbf{V}_4$ such that $H$ is birational to a plane singular sextic having $s$ double points, where $\mathbf{V}_4$ is the Klein $4$-group.
\end{coro}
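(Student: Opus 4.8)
The plan is to read off the values $s\in\{2,3,4\}$ from the type $(g_1,g_2,r)=(1,1,0)$ family treated in Theorems \ref{thm:main1} and \ref{thm:main2}, and the value $s=5$ from the type $(2,2,4)$ family treated in \cite{MK23}. Before specializing, I would record two properties shared by every Howe curve $H$: it is non-singular, because by definition it is the normalization of $C_1\times_{\mathbb{P}^1}C_2$; and $\mathrm{Aut}(H)\supset\mathbf{V}_4$, because the hyperelliptic involutions $\iota_1,\iota_2$ on $C_1,C_2$ give rise to the two commuting involutions $\iota_1\times\mathrm{id}$ and $\mathrm{id}\times\iota_2$ on the fiber product, which are distinct and non-trivial once $g_1,g_2\ge 1$, hence generate a copy of the Klein four-group, and which lift to birational self-maps, i.e.\ automorphisms, of $H$. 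Moreover $H$ is non-hyperelliptic in both cases at hand: the genus is $2(g_1+g_2)+1-r=5$ and $r<g_1+g_2+1$ (namely $0<3$ and $4<5$), so \cite[Theorem 2]{katsura2021decomposed} applies.

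For $s\in\{2,3,4\}$ I would argue as follows. Start from a tuple $(\alpha_1,\dots,\alpha_4,\beta_1,\dots,\beta_4)$ with the $\alpha_i$ pairwise distinct, the $\beta_j$ pairwise distinct, and $\{\alpha_1,\dots,\alpha_4\}\cap\{\beta_1,\dots,\beta_4\}=\emptyset$; these are exactly the conditions making $C_1:y_1^2=\prod_i(x-\alpha_i)$ and $C_2:y_2^2=\prod_j(x-\beta_j)$ into a valid non-hyperelliptic type-$(1,1,0)$ Howe curve $H$ (the disjointness forces $r=0$ and also forces $C_1\not\cong C_2$ over $\mathbb{P}^1$). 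By Theorem \ref{thm:main1}, $H$ is birational to the sextic $C:f(x,y)=0$ with the displayed coefficients, and by Theorem \ref{thm:main2} its projective closure $\tilde C$ has exactly $2$, $3$, or $4$ double points, and $4$ for a generic tuple. To realize each value I would use the explicit description of the location of the singularities, and of the algebraic conditions among the $\alpha_i,\beta_j$ that govern their number, obtained in the course of proving Theorem \ref{thm:main2}: a generic tuple yields $s=4$, while imposing the one (resp.\ the two) coincidence(s) that the proof singles out as collapsing the configuration yields a tuple with $s=3$ (resp.\ $s=2$). I would then exhibit one numerical tuple for each $s$ and check directly that it lies in the Howe locus above, that $c_{60}$ and $c_{42}$ do not both vanish (so $\tilde C$ is genuinely a sextic), and that the singularity count is the claimed one. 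Each such $H$ is non-hyperelliptic, non-singular, and has $\mathrm{Aut}(H)\supset\mathbf{V}_4$ by the first paragraph.

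For $s=5$ I would invoke \cite[Theorems 1 and 2]{MK23}: a non-hyperelliptic genus-$5$ Howe curve of type $(2,2,4)$ is birational to an explicit plane sextic whose projective closure carries $5$ double points for a suitable choice of its defining parameters, and such a curve is again non-singular with $\mathrm{Aut}\supset\mathbf{V}_4$ by the first paragraph. Assembling the two families then produces, for each $s\in\{2,3,4,5\}$, a non-hyperelliptic non-singular genus-$5$ curve $H$ with $\mathrm{Aut}(H)\supset\mathbf{V}_4$ that is birational to a plane sextic with $s$ double points.

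The step I expect to be the real work is the middle one: extracting the degeneration conditions for $s=3$ and $s=2$ explicitly from the proof of Theorem \ref{thm:main2} and choosing parameters that satisfy them while remaining inside the Howe locus, without accidentally (i) dropping the degree of $\tilde C$ below $6$, or (ii) merging two double points into a single singular point of higher multiplicity; this last possibility is precisely what Theorem \ref{thm:main2} excludes, since it guarantees that every singularity of $\tilde C$ has multiplicity $2$, so the only freedom is in how many of them there are. The remaining steps are direct appeals to the cited statements.
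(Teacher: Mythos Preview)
Your approach is correct and matches the paper's: the corollary is stated there without proof, as an immediate consequence of Theorems \ref{thm:main1} and \ref{thm:main2} together with \cite[Theorems~1 and~2]{MK23}, exactly as you propose (the $(1,1,0)$ family supplies $s\in\{2,3,4\}$ and the $(2,2,4)$ family supplies $s=5$). Your concern about the ``real work'' of realizing each $s\in\{2,3,4\}$ is already discharged in the paper: Propositions \ref{prop:sing1}--\ref{prop:sing2} and Table \ref{table:sing} give the exact stratification by $\#\mathrm{Sing}(\tilde C)$, and Section 4.2 exhibits explicit numerical tuples over $\mathbb{F}_{31}$ for every type, so no further verification is needed; also $c_{42}=-4$ identically, so the degree-drop worry does not arise.
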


Our sextic model constructed in this paper would be feasible to analyze non-hyperelliptic Howe curves of genus five as plane singular curves.
The use of this model could have further applications such as enumerating isomorphism classes of genus-$5$ non-hyperelliptic Howe curves with specific properties (e.g., superspecial), see Section \ref{sec:conc} below for details.

\section{Preliminaries}\label{sec:pre}

In this section, we first collect some known facts on Howe curves.
We also determine possible factorization patterns of a bivariate sextic, which are used in the next section.

\subsection{Generalized Howe curves}\label{subsec:genHowe}

Let $k$ be an algebraically closed field of characteristic $p$ with $p =0$ or $p \geq 5$.
Let $C_1$ and $C_2$ be hyperelliptic curves of genera $g_1$ and $g_2$ over $k$ with $0<g_1 \leq g_2$, and let $\psi_1: C_1 \to \mathbb{P}^1$ and $\psi_2 : C_2 \to \mathbb{P}^1$ be their hyperelliptic structures.
With mutually distinct elements $\alpha_i, \beta_j \in k \cup \{ \infty \}$ for $1 \leq i \leq 2g_1+2$ and $1 \leq j \leq 2g_2+2$, we can write
\[
\begin{aligned}
    C_1 : y_1^2 = \phi_1(x) = (x-\alpha_1) \cdots (x- \alpha_r) (x-\alpha_{r+1}) \cdots (x- \alpha_{2g_1+2}),\\
    C_2 : y_2^2 =\phi_2 (x)= (x-\alpha_1) \cdots (x- \alpha_r) (x-\beta_{r+1}) \cdots (x- \beta_{2g_2+2}).
\end{aligned}
\]
In the case where $\alpha_i = \infty$ or $\beta_j= \infty$, we remove the factors $x-\alpha_i$ or $x-\beta_j$ from the equations for $C_1$ and $C_2$.
Supposing the non-existence of an isomorphism $\varphi : C_1 \to C_2$ with $\psi_2 \circ \varphi = \psi_1$, the fiber product $C_1 \times_{\mathbb{P}^1} C_2$ over $\mathbb{P}^1$ is irreducible, and $r \leq g_1+g_2+1$.
In \cite{katsura2021decomposed}, Katsura-Takashima define a {\it generalized Howe curve} $H$ associated with $C_1$ and $C_2$ as a non-singular curve birational to the (possibly singular) curve $C_1 \times_{\mathbb{P}^1} C_2$.
In this paper we refer to it simply as a {\it Howe curve}. 
They also proved in \cite[Proposition 1]{katsura2021decomposed} that $g(H)=2(g_1+g_2) +1 - r$, where $g(H)$ denotes the genus of $H$.
Note that the hyperelliptic involutions of $C_1$ and $C_2$ lift to automorphisms $\sigma_1$ and $\sigma_2$ on $H$ of order-$2$, and therefore the automorphism group $\mathrm{Aut}(H)$ contains a subgroup isomorphic to the Klein $4$-group $\mathbf{V}_4$.
We obtain the quotient curve $C_3 := H/ \langle \sigma_1 \sigma_2 \rangle$ of $H$ by $\sigma_1 \sigma_2$, 
The genus of $C_3$ is $g_3 = g_1+g_2+1-r$, and a defining equation is
\[
C_3 : y_3^2 = \phi_3(x) = (x-\alpha_{r+1}) \cdots (x- \alpha_{2g_1+2}) (x-\beta_{r+1}) \cdots (x- \beta_{2g_2+2}).
\]
It is also proved in \cite{katsura2021decomposed} that, if $g(H) \geq 4$, then $H$ is hyperelliptic if and only if $r = g_1+g_2+1$, i.e., $C_3$ is rational.


\subsection{Defining equations for Howe curves}\label{subsec:defHowe}

Howe curves are interesting objects in themselves, and also useful to produce curves (over finite fields) having many rational points with respect to genus, see \cite{howe2016quickly} and \cite{howe2017curves}.
See also \cite{kudo2020algorithms}, \cite{kudo2020existence}, \cite{moriya2022some}, and \cite{ohashi2022fast} for applications to construct supersingular or superspecial curves.
Constructing defining equations for Howe curves is a basic problem.
In the case where $H$ is hyperelliptic, a formula of its hyperelliptic equation is known, see \cite[Lemmas 2.2.5 and 2.4.2]{moriya2022some}.
In the case where $H$ is non-hyperelliptic, Moriya-Kudo~\cite[Section 4.2]{moriya2022some} found an explicit plane quartic model for genus-$3$ Howe curves.
For the case of genus $4$, Kudo-Harashita-Howe~\cite[Section 2]{kudo2020algorithms} represented $H$ as the complete intersection of a quadric and a cubic in $\mathbb{P}^3$, which is the canonical model of a non-hyperelliptic curve of genus $4$.

This paper studies genus-$5$ non-hyperelliptic Howe curves.
As it was noted in Section \ref{sec:intro}, such a curve is constructed from parameter sets $(g_1,g_2,r) = (2,2,4)$, $(1,2,2)$, $(1,3,4)$, or $(1,1,0)$.
Among them, when $(g_1,g_2,r) = (1,2,2)$, the third curve $C_3$ defined in the previous subsection has genus $g_3 = 2$, and it shares exactly $r'= 4$ ramification points with $C_2$.
Since $C_2 \times_{\mathbb{P}^1} C_3$ is birational to $C_1 \times_{\mathbb{P}^1} C_2$, we find that considering the case $(g_1,g_2,r) = (1,2,2)$ is equivalent to considering the case $(g_1,g_2,r) = (2,2,4)$.
Similarly, for $(g_1,g_2,r) = (1,3,4)$, one can verify that $C_1$ and $C_3$ with $(g_1,g_3) = (1,1)$ has no common ramification point, and that $C_1 \times_{\mathbb{P}^1} C_3$ is birational to $C_1 \times_{\mathbb{P}^1} C_2$.
Therefore, it suffices to consider the cases $(g_1,g_2,r) = (2,2,4)$ and $(1,1,0)$ only.
The former case was first studied in \cite{howe2017curves} by Howe with his motivation to produce curves over finite fields with many rational points, and its plane sextic model is explicitly constructed in \cite{MK23} by Moriya-Kudo:

\begin{theorem}[{\cite[Theorems 1 and 2]{MK23}}]\label{thm:MK1-1}
    With notation as above, every non-hyperelliptic Howe curve $H$ of genus $5$ associated with two genus-$2$ curves $C_1 : y^2 = x (x-1)(x-\alpha_1)(x-\alpha_2)(x-\alpha_3)$ and $C_2:y^2 = x(x-1)(x-\alpha_1)(x-\beta_2)(x-\beta_3)$ with $\alpha_i,\beta_j \in k \smallsetminus \{ 0,1\}$ is birational to a plane sextic singular curve $C$ defined by
    \[
    f = \sum_{0< i+j \leq 3} c_{2i,2j}Y^{2i}Z^{2j} = 0
    \]
    with a node $(0,0)$, where $c_{2i,2j}$'s are written as polynomials in $\alpha_1, \alpha_2, \alpha_3, \beta_2, \beta_3$ explicitly.
    Moreover, once $\alpha_1, \alpha_2, \alpha_3, \beta_2,\beta_3$ are given, all the $9$ coefficients $c_{2i,2j}$ can be computed with operations in a field to which all $\alpha_i$'s and $\beta_j$'s belong, and the number of required operations in the field is bounded by a constant (not depending on the characteristic $p$ of $k$).

    In addition, the projective closure $\tilde{C}$ in $\mathbb{P}^2$ of $C$ has exactly $3$ or $5$ singularities, all of which are of multiplicity $2$.
    Furthermore, $\tilde{C}$ has $5$ double points generically.
\end{theorem}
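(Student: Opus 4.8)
The plan is to exploit the Klein four-group action to reduce the problem to eliminating $x$ from a pair of cubics, and then to read off the singularities of the resulting sextic from a rational cubic lying underneath it. Write $h(x) = x(x-1)(x-\alpha_1)$ for the common factor of $\phi_1(x) = h(x)(x-\alpha_2)(x-\alpha_3)$ and $\phi_2(x) = h(x)(x-\beta_2)(x-\beta_3)$, and let $\sigma_1 \colon y_1 \mapsto -y_1$ and $\sigma_2 \colon y_2 \mapsto -y_2$ be the two commuting involutions generating $\mathbf{V}_4$ on the fiber product with coordinates $(x, y_1, y_2)$. The shape of $f$, even in both $Y$ and $Z$, is exactly what results from choosing plane coordinates on which $\sigma_1$ acts by $Y \mapsto -Y$ and $\sigma_2$ by $Z \mapsto -Z$. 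Concretely I would set
\[
Y = \frac{y_1}{x(x-1)}, \qquad Z = \frac{y_2}{x(x-1)},
\]
so that $u := Y^2 = (x-\alpha_1)(x-\alpha_2)(x-\alpha_3)/\big(x(x-1)\big)$ and $v := Z^2 = (x-\alpha_1)(x-\beta_2)(x-\beta_3)/\big(x(x-1)\big)$ are $\mathbf{V}_4$-invariant, hence rational functions of $x$ alone of degree $3$. The point is that dividing by the degree-$2$ factor $x(x-1)$ of the shared cubic $h$ (rather than by $h$ itself or by the unshared quadratics) is what makes the map $x \mapsto (u,v)$ have image a plane cubic passing through the origin.

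Next I would eliminate $x$. Setting $P_1(x) = u\,x(x-1) - (x-\alpha_1)(x-\alpha_2)(x-\alpha_3)$ and $P_2(x) = v\,x(x-1) - (x-\alpha_1)(x-\beta_2)(x-\beta_3)$, both cubic in $x$ with leading coefficient $-1$ and affine-linear in $u$, resp.\ $v$, the resultant $g(u,v) = \mathrm{Res}_x(P_1,P_2)$ is of degree at most $3$ in each of $u$ and $v$; a short check of the term of top degree (using that the root of $P_1$ escaping to infinity as $u \to \infty$ contributes a $v$-free factor, and that the constant leading coefficients leave no spurious factor) shows that $g$ has total degree $3$ in $(u,v)$ and vanishes at the origin, so that $f(Y,Z) = g(Y^2, Z^2)$ has precisely the claimed $9$-coefficient form. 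Birationality of $H \dashrightarrow C$ follows because $k(u,v) = k(x)$ (the degree-$3$ map to the cubic is generically one-to-one) and then $y_1 = Y\,x(x-1)$, $y_2 = Z\,x(x-1)$ recover all of $k(H)$. For the node at $(0,0)$ I would note that the origin is the image of the single parameter $x = \alpha_1$ and that the map is an immersion there with tangent direction $\big((\alpha_1-\alpha_2)(\alpha_1-\alpha_3),\,(\alpha_1-\beta_2)(\alpha_1-\beta_3)\big)$ having both entries nonzero; equivalently $c_{20}, c_{02} \neq 0$, so the tangent cone $c_{20}Y^2 + c_{02}Z^2$ of $C$ splits into two distinct lines. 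The explicitness and constant-complexity claims then come for free: the two inputs to the resultant have degree bounded independently of the parameters, so $g$, and hence all nine $c_{2i,2j}$, can be expanded symbolically once as fixed polynomials in $\alpha_1, \alpha_2, \alpha_3, \beta_2, \beta_3$, and evaluating these fixed expressions costs a number of field operations bounded by a universal constant. I would also record that every denominator arising in the normalisation is a product of factors $\alpha_i - \alpha_j$, $\alpha_i - \beta_j$, $\beta_i - \beta_j$ and of the excluded values $0,1$, all units because the branch points are distinct and $p = 0$ or $p \geq 5$; hence no reduction modulo $p$ makes a denominator vanish.

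Finally, for the singularities of $\tilde{C}$ I would argue through the rational cubic $g(u,v) = 0$ underneath. Since a plane sextic has arithmetic genus $10$ and $H$ has genus $5$, the total delta-invariant of $\tilde{C}$ must equal $5$; one unit is the node at the origin. A rational plane cubic has a unique further singular point, of delta-invariant $1$ (a node for generic parameters), at some $(u_\ast, v_\ast)$ where two distinct values of $x$ coincide; under the bidouble cover $(u,v) = (Y^2,Z^2)$ this point lifts to a single $\mathbf{V}_4$-orbit, which is a set of four nodes when $u_\ast v_\ast \neq 0$ (the cover being unramified there) and a set of two tacnodes when exactly one of $u_\ast, v_\ast$ vanishes (a local model $\tilde Z^2 = c_i\tilde Y$ shows two smooth branches sharing a tangent line). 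This yields $1 + 4 = 5$ double points of delta-invariant $1$ generically, and $1 + 2\cdot 2 = 5$ accounted for by $3$ double points in the degenerate case, matching the assertion of exactly $3$ or $5$ singularities, all of multiplicity $2$. To pin down the generic count I would exhibit one parameter tuple giving five distinct nodes, equivalently show that $u_\ast = 0$ or $v_\ast = 0$ cuts out a proper closed subscheme of parameter space. I expect the main obstacle to be precisely this last analysis: determining the exact locus on which the cubic's node falls on a coordinate axis, and verifying by a local computation along that axis that the lifted singularity is a tacnode (delta-invariant $2$, multiplicity $2$) rather than something of higher multiplicity, since this is the one step that a global degree count cannot settle by itself.
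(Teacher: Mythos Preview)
This statement is Theorem~\ref{thm:MK1-1} in the paper, which is \emph{not} proved there: it is quoted verbatim from \cite{MK23} as background for the case $(g_1,g_2,r)=(2,2,4)$, while the present paper treats only the complementary case $(g_1,g_2,r)=(1,1,0)$. There is therefore no proof in the paper to compare against; the paper merely remarks that Moriya--Kudo obtain their sextic ``by computing resultants,'' which is exactly the mechanism you invoke.

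Your construction is sound and matches the cited method in spirit. The choice $Y=y_1/\bigl(x(x-1)\bigr)$, $Z=y_2/\bigl(x(x-1)\bigr)$ forces $u=Y^2$ and $v=Z^2$ to be degree-$3$ rational functions of $x$ with the same quadratic denominator, so $x\mapsto(u:v:1)$ extends to a degree-$3$ morphism $\mathbb{P}^1\to\mathbb{P}^2$ and the image is a rational cubic through the origin; the resultant argument then recovers the cubic $g$, and $f=g(Y^2,Z^2)$ gives the desired nine-term sextic. Your check that $k(Y,Z)=k(H)$ and that the tangent cone $c_{20}Y^2+c_{02}Z^2$ at the origin is split is correct.

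The genuine gap is in the singularity count for $\tilde{C}$. Two issues are not handled. First, you do not analyse the line at infinity: because $f$ has no odd-degree part, $F_W$ vanishes identically along $W=0$, so every multiple root of $f_6(Y,Z)=g_3(Y^2,Z^2)$ is automatically a singular point of $\tilde{C}$. This happens not only when the cubic's node lies at infinity, but also whenever one of the three asymptotic directions of the cubic lands on a coordinate axis $(1{:}0)$ or $(0{:}1)$; ruling this out (and the coincidence of two asymptotic directions) is part of establishing the generic count of $5$ and requires the hypothesis $\alpha_i,\beta_j\notin\{0,1\}$. Second, the ``exactly $3$ or $5$'' dichotomy demands excluding intermediate configurations---for instance, the cubic's node being a cusp, or lying at the intersection of an axis with the line at infinity, or a branch of the node being tangent to an axis---each of which would alter the $\delta$-distribution. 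Your closing paragraph correctly flags this as the hard step, but the sketch as written does not yet supply the case analysis that turns the genus-arithmetic constraint $\sum\delta_P=5$ into the sharp statement on the \emph{number} of singular points.
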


This paper focuses on the latter case: $(g_1,g_2,r)=(1,1,0)$.
As in the former case, we will construct a plane sextic model in Section \ref{sec:main} below.

\subsection{Factorization patterns of a certain bivariate sextic}
Let $k$ be an arbitrary field of characteristic different from $2$.
Consider a bivariate sextic in $k[x,y]$ of the form
\[
\begin{aligned}
f=& (b_{60} x^6 + b_{42} x^4 y^2) + (b_{50} x^5 + b_{32} x^3 y^2) + (b_{40} x^4 + b_{22}x^2 y^2 + b_{04}y^4)\\
& + (b_{30} x^3 + b_{02}x y^2) + (b_{20} x^2 + b_{02} y^2) + b_{10} x + b_{00}
\end{aligned}
\]
with $b_{42} = -4$ and $b_{04} = 1$, where $b_{ij}$ denotes the coefficient of $x^i y^j$ in $f$.
We also assume that $b_{60} = b_{40} = b_{20} = b_{00} = 0$ does not hold, namely $f(x,0) \neq 0$.

\begin{lemma}\label{lem:factor}
With notation as above, assume that $f$ is reducible (over $k$).
Then $f$ is factored into the product of $H_1$ and $H_2$ given as follows:
\begin{enumerate}
    \item[{\bf (A)}] For $a_i\in k$ with $1 \leq i \leq 7$,
    \[
    \begin{cases}
    H_1 = y^2 + (a_1 x^2 + a_2 x + a_3),\\
    H_2 = y^2 + (- 4 x^4 + a_4 x^3 + a_5 x^2 + a_6 x +  a_7),
    \end{cases}
    \]
    whence
    \[
    \begin{aligned}
        H_1H_2=& (-4 a_1 x^6 - 4 x^4 y^2) + ((a_1 a_4 - 4a_2)x^5 + a_4 x^3 y^2) \\
        & + ((a_1 a_5 + a_2 a_4 - 4 a_3 ) x^4 + (a_1 + a_5) x^2 y^2 + y^4)\\
        & + ((a_1 a_6 + a_2 a_5 + a_3 a_4) x^3 + (a_2 + a_6) x y^2) \\
        & + ((a_1 a_7 + a_2 a_6 + a_3 a_5 ) x^2  + (a_3 + a_7) y^2) + (a_2 a_7 + a_3 a_6)x + a_3 a_7.
    \end{aligned}
    \]
    
    \item[{\bf (B)}] For $a_i \in k$ with $1 \leq i \leq 6$,
    \[
    \begin{cases}
        H_1 = y^2 + (2 x^2 + a_1 x + a_2) y + (a_3 x^3 + a_4 x^2 + a_5 x + a_6),\\
        H_2 = y^2 + (-2 x^2 - a_1 x - a_2) y + (a_3 x^3 + a_4 x^2 + a_5 x + a_6),
    \end{cases}
    \]
    whence
    \[
    \begin{aligned}
        H_1H_2=& (a_3^2 x^6 - 4 x^4 y^2) + ((2a_3 a_4 x^5 + (-4a_1 + 2 a_3) x^3 y^2) \\
        & + ((2 a_3 a_5 + a_4^2 ) x^4 + ( - a_1^2 - 4 a_2 + 2 a_4) x^2 y^2 + y^4)\\
        & + ((2 a_3 a_6 + 2 a_4 a_5 ) x^3 + (-2a_1a_2 + 2a_5) x y^2) \\
        & + ((2a_4 a_6 +  a_5^2 ) x^2  + (-a_2^2 + 2a_6) y^2) + 2 a_5 a_6 x + a_6^2.
    \end{aligned}
    \]
\end{enumerate}
\end{lemma}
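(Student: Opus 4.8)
The plan is to use that $f$ is \emph{biquadratic in} $y$. Write $f=y^{4}+A(x)\,y^{2}+B(x)$ with
\[
A(x)=-4x^{4}+b_{32}x^{3}+b_{22}x^{2}+b_{12}x+b_{02},\qquad
B(x)=b_{60}x^{6}+b_{50}x^{5}+b_{40}x^{4}+b_{30}x^{3}+b_{20}x^{2}+b_{10}x+b_{00}.
\]
Since $b_{42}=-4\neq0$ we have $\deg A=4$, and by hypothesis $B=f(x,0)\neq0$, so $\deg B\le6$ while $B\ne0$. Regarding $f$ as an element of $k[x][y]$, its leading coefficient (that of $y^{4}$) is $1$, so $f$ is primitive; hence in any factorization of $f$ into non-units, each factor is primitive and, being a non-unit, has positive degree in $y$.

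The first step is to reduce to a factorization into two quadratics in $y$. Suppose $f=g\cdot h$ with $g,h$ non-units; after scaling we may take $g,h$ monic in $y$, and we may assume $\deg_{y}g\le\deg_{y}h$, so $\deg_{y}g\in\{1,2\}$. If $\deg_{y}g=1$, then $g=y+d(x)$ for some $d\in k[x]$; here $d\ne0$, since $d=0$ would give $y\mid f$, i.e.\ $B=0$, contrary to hypothesis. As $f(x,-y)=f(x,y)$, the polynomial $y-d$ also divides $f$; it is irreducible of degree $1$, it does not divide $y+d$ (because $d\ne-d$, using $\mathrm{char}\,k\ne2$), so it divides $h$. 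Dividing out, $f=(y^{2}-d^{2})\,h_{1}$ with $h_{1}$ monic in $y$ of $y$-degree $2$. Thus in all cases $f=(y^{2}+p y+q)(y^{2}+p'y+q')$ with $p,p',q,q'\in k[x]$.

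The second step is coefficient comparison:
\[
y^{4}+A y^{2}+B=y^{4}+(p+p')y^{3}+(q+q'+pp')y^{2}+(pq'+p'q)y+qq',
\]
so $p'=-p$, $p(q'-q)=0$, $q+q'-p^{2}=A$, and $qq'=B$. Hence either $p=0$ or $q=q'$, and the remaining --- and main --- work is a degree analysis in each case using $\deg A=4$ and $\deg B\le6$. If $p=0$, then $q+q'=A$ and $qq'=B$ with $qq'\ne0$; the $x^{4}$-coefficient of $A$ is $-4\ne0$, so $\max(\deg q,\deg q')\ge4$, while neither can exceed degree $4$ (else the degrees in $q+q'$ or in $qq'$ would not match) and they cannot both equal $4$ (else $\deg(qq')=8>6$), so exactly one, say $q'$, has degree exactly $4$ with leading coefficient $-4$, whence $\deg q=\deg B-4\le2$; writing $q=a_{1}x^{2}+a_{2}x+a_{3}$ and $q'=-4x^{4}+a_{4}x^{3}+a_{5}x^{2}+a_{6}x+a_{7}$ gives case (A). If $q=q'=:q$, then $B=q^{2}$, so $q\ne0$ and $\deg q\le3$, hence $2q$ has degree $\le3$; since $A=2q-p^{2}$ has degree $4$ with $x^{4}$-coefficient $-4$, the polynomial $p^{2}$ has degree $4$ with $x^{4}$-coefficient $4$, so $\deg p=2$ with leading coefficient $\pm2$. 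Replacing $p$ by $-p$ if necessary (which only interchanges the two factors) we may assume it equals $2$, so $p=2x^{2}+a_{1}x+a_{2}$ and $q=a_{3}x^{3}+a_{4}x^{2}+a_{5}x+a_{6}$, giving case (B). The displayed expansions of $H_{1}H_{2}$ are then a direct verification.

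I expect the only subtle point to be the first step --- forcing a reducible $f$ to admit a factorization into two quadratics in $y$, in particular ruling out a genuine factor of $y$-degree one --- which is precisely where evenness of $f$ in $y$, the hypothesis $f(x,0)\ne0$, and $\mathrm{char}\,k\ne2$ enter; the rest is bounded, mechanical bookkeeping of degrees.
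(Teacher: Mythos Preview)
Your proof is correct and is cleaner than the paper's. The paper classifies factorizations by \emph{total} degree --- a linear factor, a $(2,4)$ split, a $(3,3)$ split, or three irreducible quadratics (the last handled in a separate auxiliary lemma) --- and within each case analyzes the possible $y$-degree distributions, using the symmetry $f(x,y)=f(x,-y)$ together with irreducibility of the factors to force $H_i(x,y)=H_i(x,-y)$ or $H_1(x,y)=H_2(x,-y)$. You instead exploit from the outset that $f$ is a monic biquadratic in $y$ over $k[x]$: primitivity forces every non-unit factor to have positive $y$-degree, and the evenness in $y$ (plus $f(x,0)\neq 0$) upgrades any $y$-linear factor to a $y$-quadratic one, so you land directly on $f=(y^2+py+q)(y^2-py+q')$ with $p(q'-q)=0$. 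The dichotomy $p=0$ versus $q=q'$ then gives cases (A) and (B) after a short degree count. This sidesteps the total-degree casework entirely and absorbs the paper's separate three-quadratic lemma into the same argument; the price is only that the degree bookkeeping in case (A) (ruling out $\deg q,\deg q'>4$ or both equal to $4$) must be stated carefully, which you do.
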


\begin{proof}
    We first consider the case where $f$ has a linear factor, say $ a y + b x + c$ with $a,b,c \in k$.
    If $a = 0$, then $b \neq 0$, and $f(-c/b,y)$ is identically zero, but this contradicts $b_{04} \neq 0$.
    Therefore, we have $a \neq 0$, whence we may assume $a=1$.
    Since $f$ is a polynomial in $y^2$, it is divided by both of $y + b x + c$ and $y - (b x + c)$.
    Here, we claim $b x + c \neq - (b x + c)$.
    Indeed, if $b x + c = - b x - c$, then $b = c = 0$, which implies that $f$ is divisible by $y$.
    However, this contradicts our assumption $f(x,0) \neq 0$.
    Thus, $f$ is divisible by $H_1 := (y + b x + c)(y - (b x + c))$.
    Putting $H_1 := y^2 + (a_1 x^2 + a_2 x + a_3)$ for $a_1,a_2,a_3 \in k$ and dividing $f$ by it, we \textcolor{black}{obtain} $f = H_1 H_2$, where $H_2$ is of the form given in the statement {\bf (A)}.

    Next, we assume that $f$ has no linear factor.
    In this case, we can factorize $f$ into $f = H_1 H_2$ for irreducible polynomials $H_1$ and $H_2$ in $k[x,y]$ with $(\mathrm{deg}(H_1), \mathrm{deg}(H_2)) =(2,4)$ or $(3,3)$, or into $f = Q_1 Q_2 Q_3$ for irreducible quadratic polynomials $Q_1$, $Q_2$, and $Q_3$ in $k[x,y]$.
    It will be proved in Lemma \ref{lem:3quad} below that $f$ is written as the form of {\bf (A)} in the latter case, and we here consider the former case only.
    We may also suppose from $b_{04} =1$ that $H_1$ and $H_2$ are monic polynomials of degree $\geq 1$ with respect to $y$.
    Note that
    \[
    H_1(x,y) H_2(x,y) = H_1(x,-y)H_2(x,-y)
    \]
    from $f(x,y) = f(x,-y)$, whence $(H_1(x,y),H_2(x,y)) = (H_1(x,-y),H_2(x,-y))$ or $(H_1(x,y),H_2(x,y)) =(H_2(x,-y),H_1(x,-y))$ holds from the irreducibilities of $H_1$ and $H_2$.

    In the case where $(\mathrm{deg}(H_1), \mathrm{deg}(H_2)) =(2,4)$, we may assume that $H_1$ and $H_2$ are given as
        \[
    \left\{
    \begin{aligned}
        H_1 =& y + (a_{1} x^2 + a_{2} x + a_{3}),\\
        H_2 =& y^3 + (a_4 x^2 + a_5 x + a_6) y^2 + (a_7 x^3 + a_8 x^2 + a_9 x + a_{10})y \\
        & + (a_{11} x^4 + a_{12} x^3 +a_{13} x^2 + a_{14}x + a_{15}) 
    \end{aligned}
    \right.
    \]
    or
    \[
    \left\{
    \begin{aligned}
        H_1 =& y^2 + (a_1 x + a_2) y + (a_3 x^2 + a_4 x + a_5),\\
        H_2 =& y^2 + (a_6 x^3 + a_7 x^2 +a_8 x + a_9) y + (a_{10}x^4 +a_{11}x^3 + a_{12} x^2 + a_{13} x + a_{14})
    \end{aligned}
    \right.
    \]
    since $f$ is monic in degree $4$ as a polynomial in $y$ over $k[x]$.
    The former case is impossible.
    Indeed, since $H(x,-y)$ also divides $f$, we deduce $H_1 = y$, which contradicts $f(x,0) \neq 0$.
    Therefore, we may suppose the latter case.
    \if 0
    Clearly the coefficients of $x^3 y^3$ and $x^2 y^3$ in $H_1 H_2$ are $a_6$ and $a_7$ respectively, and they are zero by the form of $f$.
    For $a_6 = a_7 = 0$, it follows from an expansion of $H_1H_2$ that 
    \[
    \left\{
    \begin{array}{ll}
    b_{51} = & a_1 a_{10},\\
    b_{42} = & a_{10},\\
    b_{41} = & a_1 a_{11} + a_2 a_{10},\\
    b_{13} = & a_1 + a_8,\\
    b_{03} = &  a_2 + a_9.
    \end{array}
    \right.
\]
By $b_{13}=0$ and $b_{03} = 0$, we have $a_8 = -a_1$ and $a_9 = - a_2$.
Since $b_{42} =-4 \neq 0$ and since $b_{51} = 0$, we have $a_{10} = -4$ and $a_1 = 0$.
It follows from $b_{41} = -4 a_2 = 0$ that $a_2 = 0$, 
\fi
and we have $(H_1(x,y),H_2(x,y)) = (H_1(x,-y),H_2(x,-y))$ from $(\mathrm{deg}(H_1), \mathrm{deg}(H_2)) =(2,4)$.
In this case, it follows that $a_1=a_2=a_6=a_7=a_8=a_9=0$ and $a_{10}=-4$, and therefore
\[
    \begin{cases}
    H_1 = y^2 + (a_3 x^2 + a_4 x + a_5),\\
    H_2 = y^2 + (- 4 x^4 + a_{11} x^3 + a_{12} x^2 + a_{13} x +  a_{14}).
    \end{cases}
\]
Renaming indices of $a_i$'s, we obtain the desired $H_1$ and $H_2$.

    In the case where $(\mathrm{deg}(H_1), \mathrm{deg}(H_2)) =(3,3)$, since $f$ is monic in degree $4$ as a polynomial in $y$ over $k[x]$, we may assume that $H_1$ and $H_2$ are given as
    \[
    \left\{
    \begin{aligned}
        H_1 =& y^3 + (a_1 x + a_2) y^2 + (a_3 x^2 + a_4 x + a_5)y + (a_6 x^3 + a_7 x^2 +a_8 x + a_9) ,\\
        H_2 =& y + (a_{10}x^3 + a_{11} x^2 + a_{12} x + a_{13}),
    \end{aligned}
    \right.
    \]
    or
    \[
    \left\{
    \begin{aligned}
        H_1 =& y^2 + (a_1 x^2 + a_2 x + a_3)y + (a_4 x^3 + a_5 x^2 +a_6 x + a_7) ,\\
        H_2 =& y^2 + (a_8 x^2 + a_9 x + a_{10})y + (a_{11} x^3 + a_{12} x^2 +a_{13} x + a_{14}).
    \end{aligned}
    \right.
    \]
    The former case is impossible.
    Indeed, the coefficient of $x^3 y^3$ in $H_1H_2$ is equal to $a_{10}$, which should be zero by $b_{33}=0$ in $f$.
    On the other hand, the coefficient of $x^4 y^2$ is equal to $a_1 a_{10} = 0$, which contradicts our assumption $b_{42} \neq 0$.
    Here, let us consider the latter case.
    If $(H_1(x,y),H_2(x,y)) = (H_1(x,-y),H_2(x,-y))$, then $a_1 = a_2 = a_3=0$, so that the coefficient of $x^4 y^2$ in $H_1H_2$ is zero, a contradiction.
    Therefore, one has $(H_1(x,y),H_2(x,y)) = (H_2(x,-y),H_1(x,-y))$, and thus $H_1$ and $H_2$ are given as
     \[
    \left\{
    \begin{aligned}
        H_1 =& y^2 + (a_1 x^2 + a_2 x + a_3)y + (a_4 x^3 + a_5 x^2 +a_6 x + a_7) ,\\
        H_2 =& y^2 + (-a_1 x^2 - a_2 x - a_{3})y + (a_{4} x^3 + a_{5} x^2 +a_{6} x + a_{7}),
    \end{aligned}
    \right.
    \]
    with $a_1^2 = 4$.
    It is straightforward that we may assume $a_1 =2$, and we obtain $H_1$ and $H_2$ as in {\bf (B)}, by renaming indices of $a_i$'s.

\end{proof}

\begin{lemma}\label{lem:3quad}
    With notation as above, if $f$ is factored into the product of three irreducible quadratic polynomials $Q_1$, $Q_2$, and $Q_3$ in $k[x,y]$, then it is written as the form {\bf (A)}.
\end{lemma}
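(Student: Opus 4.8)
The plan is to exploit that $f$ is even in $y$ together with unique factorization in the UFD $k[x,y]$, and then to pin down the possible $y$-degrees and leading $y$-coefficients of $Q_1, Q_2, Q_3$ forced by the prescribed shape of $f$.

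First I would record that, reading off the displayed form of $f$, one has $\deg_y f = 4$ with $y^4$-coefficient $b_{04} = 1$ and no odd power of $y$ occurring. Since $k[x,y]$ is a domain, $y$-degree is additive, so $\deg_y Q_1 + \deg_y Q_2 + \deg_y Q_3 = 4$; each $Q_i$ has total degree $2$, hence $\deg_y Q_i \le 2$, so the multiset of $y$-degrees is $\{2,2,0\}$ or $\{2,1,1\}$. The case $\{2,2,0\}$ is impossible: a factor of $y$-degree $0$ lies in $k[x]$ and has $x$-degree $2$, hence must divide every coefficient of $f$ viewed in $k[x][y]$, in particular the $y^4$-coefficient $1$, a contradiction. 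So, after relabelling, $\deg_y Q_1 = 2$ and $\deg_y Q_2 = \deg_y Q_3 = 1$. Comparing leading $y$-coefficients, their product equals $b_{04} = 1 \in k^\times$; the one from $Q_1$ is a constant and those from $Q_2, Q_3$ have $x$-degree $\le 1$, so all three are nonzero constants, and after rescaling I may take
\[
Q_1 = y^2 + \ell(x) y + m(x), \qquad Q_2 = y + p(x), \qquad Q_3 = y + q(x),
\]
with $\deg \ell \le 1$ and $\deg m, \deg p, \deg q \le 2$.

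The core step is the symmetry argument. From $f(x,y) = f(x,-y)$ and uniqueness of factorization, the multiset $\{Q_1(x,-y), Q_2(x,-y), Q_3(x,-y)\}$ agrees with $\{Q_1, Q_2, Q_3\}$ up to units; since $Q_1(x,-y)$ is the only factor of $y$-degree $2$, the pair $Q_2(x,-y), Q_3(x,-y)$ is a unit-permutation of $Q_2, Q_3$. If $Q_2(x,-y)$ were a unit multiple of $Q_2$, monicity in $y$ would force $Q_2 = y$, contradicting $f(x,0) \ne 0$; likewise for $Q_3$. Hence $Q_2(x,-y) = -Q_3(x,y)$, i.e.\ $q = -p$, so $Q_2 Q_3 = y^2 - p(x)^2$ is even in $y$; since $f$ and $Q_2Q_3$ are even in $y$ and $k[x,y]$ is a domain, $Q_1 = f/(Q_2Q_3)$ is even in $y$ too, whence $\ell = 0$ and $Q_1 = y^2 + m(x)$. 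Finally, in $f = (y^2 + m(x))(y^2 - p(x)^2)$ the coefficient of $x^4 y^2$ equals minus the square of the $x^2$-coefficient of $p$; as this coefficient is $b_{42} = -4 \ne 0$, $p$ has degree exactly $2$ with leading coefficient $\pm 2$, so $y^2 - p(x)^2 = y^2 + (-4x^4 + a_4 x^3 + a_5 x^2 + a_6 x + a_7)$, while $Q_1 = y^2 + (a_1 x^2 + a_2 x + a_3)$. This exhibits $f = H_1 H_2$ in the form {\bf (A)}.

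The only mildly delicate part is the bookkeeping in the symmetry step — in particular invoking $f(x,0) \ne 0$ at the right moment to exclude $Q_2 = y$, and verifying that the leading-$y$-coefficient normalization is legitimate before rescaling; the remaining degree comparisons are routine.
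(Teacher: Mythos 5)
Your proof is correct and follows essentially the same route as the paper's: reduce the $y$-degrees to $(2,1,1)$, use the evenness of $f$ in $y$ together with unique factorization (and $f(x,0)\neq 0$) to force $Q_3(x,y)=-Q_2(x,-y)$, so that $Q_2Q_3=y^2-p(x)^2$, and then read off the leading coefficient from $b_{42}=-4$ and identify $Q_1$ with $H_1$. Your version simply makes explicit some steps the paper delegates to the first paragraph of the proof of Lemma~\ref{lem:factor} (e.g.\ excluding a $y$-degree-$0$ factor and the normalization to monic factors).
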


\begin{proof}
     Since $f$ is a monic quartic in $y$, we may suppose that $Q_1$, $Q_2$, and $Q_3$ are monic polynomials of degree $\geq 1$ with respect to $y$, and may also assume that $(\deg_y Q_1, \deg_y Q_2,\deg_y Q_3) = (2,1,1)$.
    We can write $Q_2 = y + (ax^2 + b x + c)$ and $Q_3 =  y + (a'x^2 + b' x + c')$ for $a,b,c,a',b',c' \in k$.
        By a discussion similar to the first paragraph in the proof of Lemma \ref{lem:factor} together with the irreducibilities of $Q_2$ and $Q_3$, we obtain $a' = -a$, $b' = -b$, and $c' = -c$, so that $f$ is divisible by 
        \[
        Q_2Q_3 = y^2 + (b x^4 + a_4 x^3 + a_5 x^2 + a_6 x +  a_7)
        \]
        for some $b,a_4,a_5,a_6,a_7 \in k$.
        The condition $b_{42} = -4$ implies $b=-4$, so that $Q_2Q_3$ is of the same form as in $H_2$ for {\bf (A)}.
        Dividing $f$ by $Q_1Q_2$, we also \textcolor{black}{find} from $\deg Q_1 =2$ that $Q_1$ is of the same form as in $H_1$ for {\bf (A)}, as desired.
        \if 0
    \begin{itemize}
        \item If $(\deg_y Q_2,\deg_y Q_3) = (1,1)$, then we can write $Q_2 = y + (ax^2 + b x + c)$ and $Q_3 =  y + (a'x^2 + b' x + c')$ for $a,b,c,a',b',c' \in k$.
        By a discussion similar to the first paragraph in the proof of Lemma \ref{lem:factor} together with the irreducibilities of $Q_2$ and $Q_3$, we obtain $a' = -a$, $b' = -b$, and $c' = -c$, so that $f$ is divisible by 
        \[
        Q_2Q_3 = y^2 + (b x^4 + a_4 x^3 + a_5 x^2 + a_6 x +  a_7)
        \]
        for some $b,a_4,a_5,a_6,a_7 \in k$.
        The condition $b_{42} = -4$ implies $b=-4$, so that $Q_2Q_3$ is of the same form as in $H_2$ for {\bf (A)}.
        Dividing $f$ by $Q_1Q_2$, we also \textcolor{blue}{find} that $Q_1$ is of the same form as in $H_1$ for {\bf (A)}, as desired.
        
    \end{itemize}
    \fi
\end{proof}

\if 0
\subsection{Common roots of certain univariate polynomials}\label{subsec:common}

In this subsection, we shall determine common roots of univariate polynomials, which will appear in our analysis of singularities in Subsection \ref{subsec:sing} below.
Specifically, we here consider the following system of univariate equations:
\[
\left\{
\begin{aligned}
F_1(x) :=& 2(3d_0x^2+ 2d_1 x +d_2)(d_0 x^3 + d_1 x^2 +d_2x + d_3 ) = 0, \\
F_2(x): =& 2 (d_1 x^2 + 2d_2x + 3d_3)(d_0x^3 + d_1 x^2 + d_2 x + d_3) = 0,
\end{aligned}
\right.
\]
where $d_0,d_1,d_2,d_3 \in k$ with $d_3 \neq 0$.
Note that, putting
\begin{align}
    h_1 :=& d_0x^3 + d_1 x^2 + d_2 x + d_3,\label{eq:h1}\\
    h_2 :=& d_1 x^2 + 2d_2 x + 3d_3, \label{eq:h2}
\end{align}
and $h_1' = \frac{d}{dx}h_1$, we can write the above system as
\begin{equation}\label{eq:F1F2}
    \left\{
    \begin{aligned}
        F_1(x) =& 2 h_1' h_1,\\
        F_2(x) =& 2 h_2 h_1.
    \end{aligned}
    \right.
\end{equation}

\begin{lemma}\label{lem:common1}
With notation as above, assume $d_0 \neq 0$, and put $h_1'':=\frac{d^2}{dx^2} h_1$.
    \begin{enumerate}
        \item[{\rm (1)}] If $\mathrm{Res}_x \left( h_1, h_1' \right)  \neq 0$, then $F_1$ and $F_2$ have exactly $3$ common roots.
        \item[{\rm (2)}] If $\mathrm{Res}_x \left( h_1, h_1' \right)  = 0$ and $\mathrm{Res}_x \left( h_1', h_1''  \right)  \neq 0$, then $F_1$ and $F_2$ have exactly $2$ common roots.
        \item[{\rm (3)}] If $\mathrm{Res}_x ( h_1, h_1' ) = \mathrm{Res}_x ( h_1', h_1'' ) = 0$, then $F_1$ and $F_2$ have exactly $1$ common root.
    \end{enumerate}
\end{lemma}

\begin{proof}
It follows from a straightforward computation that
\begin{eqnarray*}
d_0^{1+i} \mathrm{Res}_x(h_1,h_2) & = & d_3 \mathrm{Res}_x\left(h_1,h_1' \right),\\
3^{1-i} \; \mathrm{Res}_x\left(h_1, h_1'\right) & = & d_0^{1+i} \mathrm{Res}_x\left(h_1',h_2\right),\\
d_3 \mathrm{Res}_x\left(h_1',h_2\right) & = & 3^{1-i} \; \mathrm{Res}_x(h_1,h_2)
\end{eqnarray*}
for $i=0$ if $d_1 \neq 0$ and for $i=1$ if $d_1 = 0$. 
Therefore, we obtain
\begin{equation}\label{eq:res}
    3^{1-i} d_3 \mathrm{Res}_x\left(h_1, h_1'\right) = 3^{1-i} d_0^{1+i} \mathrm{Res}_x\left(h_1,h_2\right) = d_0^{1+i} d_3 \mathrm{Res}_x\left( h_1',h_2\right).
\end{equation}
From this, in the case (1), we have $\mathrm{Res}_x(h_1,h_2) \neq 0$ by $d_0,d_3 \neq 0$, whence $\mathrm{gcd}(h_1,h_2)=1$.
Therefore, $F_1(x) = F_2(x) = 0$ has $3$ distinct solutions.

For (2), suppose that $\mathrm{Res}_x \left( h_1, h_1' \right)  = 0$ and that $\mathrm{Res}_x ( h_1', h_1'' )  \neq 0$, i.e., $h_1$ has a double but not triple root $x_0$, so that $h_1 = d_0 (x - \xi_0)^2(x-\xi_1)$ and $h_1' = 3d_0 (x-\xi_0)(x-\xi_2)$ for some mutually distinct elements $\xi_0,\xi_1,\xi_2 \in k$.
By \eqref{eq:res}, we also have $\mathrm{Res}_x\left(h_1,h_2\right) = \mathrm{Res}_x \left( h_1',h_2 \right)  = 0$, and therefore we can write 
\[
h_2=\left\{
\begin{array}{ll}
   d_1 (x-\xi_3)(x-\xi_4)  & \mbox{if $d_1 \neq 0$}, \\
   2 d_2(x-\xi_0)  & \mbox{if $d_1 = 0$},
\end{array}
\right.
\]
where $\xi_3 \in \{ \xi_0, \xi_2 \}$ and $\xi_4 \in \{ \xi_0,\xi_1 \}$.
If $d_1 = 0$, then the assertion of (2) holds clearly, and hence we assume $d_1 \neq 0$.
In this case, since the discriminant $d_1^2 - 3d_0d_2$ of $h_1'$ is not zero by $\xi_0 \neq \xi_2$, the element $\xi_3$ is given by
\[
\xi_3 = \frac{9d_0d_3 - d_1d_2}{2d_1^2-6d_0d_2}.
\]
as a unique common root of $h_1'$ and $h_2$.
Here, we claim $\xi_3= \xi_0$.
For this, it suffices to show $h_1(\xi_3) = 0$, equivalently $\xi_3$ is a root of $h_1(x) - h_2(x) = d_0 x^3 - d_2x - 2 d_3$.
Clearing denominators from $h_1(\xi_3)-h_2(\xi_3)$, we obtain 
\[
\begin{aligned}
    R:=& (2d_1^2-6d_0d_2)^3 (h_1(x_3)-h_2(x_3))\\
    = & d_0 (9d_0d_3 - d_1 d_2)^3 -d_2 (9d_0d_3 - d_1d_2 )(2d_1^2-6d_0d_2)^2- 2 d_3 (2d_1^2-6d_0d_2)^3 .
\end{aligned}
\]
A computer calculation shows
\[
R = \frac{1}{3}(27 d_0^2 d_3 + 9 d_0 d_1 d_2 - 4 d_1^3) \cdot \mathrm{Res}_x\left(h_1',h_2\right),
\]
which is zero by $h_1' (\xi_3) = h_2(\xi_3) = 0$, as desired.

The case (3) is straightforward.
\end{proof}

Next, consider the case where $d_0= 0$.
Dividing the cases into $d_1 \neq 0$ or $d_1 = 0$, we obtain the following lemma:

\begin{lemma}\label{lem:common2}
With notation as above, assume $d_0 = 0$ and $d_1 \neq 0$.
Then we have the following:
\begin{enumerate}
    \item[{\rm (1)}] If $\mathrm{Res}_x(h_1, h_1') \neq 0$, then $F_1$ and $F_2$ have exactly $2$ common roots, which are given by the distinct $2$ roots $x$ of $h_1(x)$.
    \item[{\rm (2)}] If $\mathrm{Res}_x(h_1, \frac{d}{dx}h_1) = 0$, then $F_1$ and $F_2$ have a unique common root, which is given by $x = -d_2/(2d_1)$.
\end{enumerate}
Moreover, supposing $d_0=d_1 = 0$, we have the following:
\begin{enumerate}
    \item[{\rm (3)}] If $d_2 \neq 0$, then $F_1$ and $F_2$ have a unique common root, which is given by $x = -d_3/d_2$.
    \item[{\rm (4)}] If $d_2= 0$, then $F_1$ and $F_2$ have no common root.
\end{enumerate}
\end{lemma}

\begin{proof}
If $d_0 = 0$, one has
\[
\left\{
\begin{aligned}
F_1 (x)=& 2(2d_1 x + d_2)(d_1 x^2 + d_2 x + d_3) = 0, \\
F_2 (x)=& 2 (d_1 x^2 + 2d_2x + 3d_3) (d_1 x^2 + d_2 x + d_3) =0
\end{aligned}
\right.
\]
with $h_1 = d_1 x^2 + d_2 x + d_3$ and $h_2 = d_1 x^2 + 2d_2x + 3d_3$.
Therefore, if $d_1 \neq 0$, a straightforward computation shows
\begin{gather*}
    3 \; \mathrm{Res}_x \left( h_1, h_1' \right) = 3 d_1 (4d_1 d_3 - d_2^2) = \mathrm{Res}_x \left(h_1',h_2 \right),\\
\mathrm{Res}_x(h_1,h_2) = d_3 \mathrm{Res}_x \left(h_1, h_1' \right).
\end{gather*}
From this together with $d_3 \neq 0$, the assertions (1) and (2) hold clearly.

As for the case where $d_0=d_1 = 0$, we obtain a system of more simplified equations:
\[
\left\{
\begin{aligned}
F_1 =& 2 d_2(d_2 x + d_3) = 0, \\
F_2=& 2 (2d_2x + 3d_3) (d_2 x + d_3) =0.
\end{aligned}
\right.
\]
This immediately deduces (3) and (4), as desired.
\end{proof}
\fi

\section{Our plane sextic model}\label{sec:main}

In this section, we shall explicitly construct a plane sextic model for non-hyperelliptic Howe curves of genus $5$ associated with two genus-$1$ double covers sharing no ramification point.
Let $k$ be an algebraically closed field of characteristic $p$ with $p =0$ or $p \geq 5$.

\subsection{Construction}\label{subsec:const}

Let $H$ be a non-hyperelliptic Howe curve of genus $5$ over $k$ associated with genus-$1$ curves $C_1$ and $C_2$ sharing no ramification point.
Considering the M\"{o}bius transformation on ramification points, we may assume that each of $C_1$ and $C_2$ is not ramified at $\infty$.
For pairwise distinct elements $\alpha_1,\alpha_2,\alpha_3,\alpha_4,\beta_1,\beta_2,\beta_3,\beta_4$ in $k$, we can write
\begin{eqnarray*}
    & & C_1  :  y_1^2  =  \phi_1 (x) = (x-\alpha_1)(x-\alpha_2)(x-\alpha_3)(x-\alpha_4),\\
    & & C_2 :  y_2^2  = \phi_2 (x) = (x-\beta_1)(x-\beta_2)(x-\beta_3)(x-\beta_4),\\
    & & C_3 :  y_3^2  = \phi_1(x) \phi_2(x),
\end{eqnarray*}
where the third curve $C_3$ is defined in Subsection \ref{subsec:genHowe}.
While Moriya-Kudo~\cite{MK23} construct a plane sextic model in the case $(g_1,g_2,r)=(2,2,4)$ by computing resultants, we realize it by a simpler method, which is a generalization of Katsura-Takashima's example provided in \cite[Example 4]{katsura2021decomposed}.
Specifically, putting $y=y_1+y_2$ and squaring both sides, we obtain $2 y_1 y_2 = y^2 - (\phi_1 + \phi_2)$.
Taking the square of both sides again, we have $4 \phi_1 \phi_2 = y^4 - 2(\phi_1 + \phi_2) y^2 +(\phi_1 + \phi_2)^2 $, so that
\begin{equation}\label{eq:sextic}
    f := y^4 - 2 (\phi_1 + \phi_2) y^2 + (\phi_1 - \phi_2)^2 = 0,
\end{equation}
where $\phi_1-\phi_2$ has degree $\leq 3$.
Since $\phi_1 + \phi_2$ is a quartic with $x^4$-coefficient $2$, the polynomial $f$ is a sextic with $x^4 y^2$-coefficient $-4$.

\begin{proposition}
    With notation as above, if $f$ is absolutely irreducible, then $C_1 \times_{\mathbb{P}^1} C_2$ is birational to the (singular) affine curve $C : f(x,y) = 0$, whence $H$ is isomorphic to the normalization of the projective closure $\tilde{C}$ in $\mathbb{P}^2$ of $C$.
\end{proposition}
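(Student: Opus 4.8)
The plan is to construct an explicit birational map between the fiber product $C_1 \times_{\mathbb{P}^1} C_2$ and the affine plane curve $C : f(x,y) = 0$, and then invoke the fact that a normalization of an irreducible plane curve is the unique smooth projective model of its function field. First I would recall that $C_1 \times_{\mathbb{P}^1} C_2$ has function field $k(x,y_1,y_2)$ with the two relations $y_1^2 = \phi_1(x)$ and $y_2^2 = \phi_2(x)$, and that its irreducibility (equivalently the absolute irreducibility of this field as a degree-$4$ extension of $k(x)$) is exactly the hypothesis that guarantees $C_1 \times_{\mathbb{P}^1} C_2$ is a genuine (irreducible) curve; this is the standing assumption from Subsection \ref{subsec:genHowe} and is reflected here in the hypothesis that $f$ is absolutely irreducible.

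Next I would set $y := y_1 + y_2 \in k(x,y_1,y_2)$ and run the computation already displayed before the statement: squaring gives $2y_1y_2 = y^2 - (\phi_1 + \phi_2)$, and squaring once more gives $4\phi_1\phi_2 = \big(y^2 - (\phi_1+\phi_2)\big)^2$, which rearranges to $f(x,y) = 0$ with $f$ as in \eqref{eq:sextic}. Thus $(x,y)$ satisfies $f = 0$, so there is a $k$-algebra homomorphism $k[x,y]/(f) \to k(x,y_1,y_2)$, and since $f$ is absolutely irreducible this induces an inclusion of function fields $k(C) \hookrightarrow k(x,y_1,y_2) = k(C_1\times_{\mathbb{P}^1} C_2)$. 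To show this is an isomorphism I would compare degrees over $k(x)$: the right-hand field has degree $4$, and I would argue the left-hand field also has degree $4$ over $k(x)$, i.e. that $f$, viewed as a polynomial in $y$ over $k(x)$, is irreducible of degree $4$ — which again follows from absolute irreducibility of $f$ together with the fact that its $y$-degree is $4$ (no factor can be a polynomial in $x$ alone since $f$ is monic in $y^4$ up to the constant $-4$ on $x^4y^2$, or more simply since $f$ is irreducible and genuinely involves $y$). Alternatively, and perhaps more cleanly, I would exhibit the inverse rational map directly: from $2y_1y_2 = y^2 - (\phi_1+\phi_2)$ and $y_1 + y_2 = y$ one solves $y_1 = \tfrac12\big(y + \tfrac{y^2 - \phi_1 - \phi_2}{y}\cdot(\text{something})\big)$ — more precisely, $y_1 - y_2$ is recovered rationally because $(y_1-y_2)(y_1+y_2) = y_1^2 - y_2^2 = \phi_1 - \phi_2$, so $y_1 - y_2 = (\phi_1 - \phi_2)/y$, and hence $y_1 = \tfrac12\big(y + (\phi_1-\phi_2)/y\big)$, $y_2 = \tfrac12\big(y - (\phi_1-\phi_2)/y\big)$, both lying in $k(x,y) = k(C)$. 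This shows $k(x,y_1,y_2) \subseteq k(x,y)$, giving the reverse inclusion and hence $k(C) \cong k(C_1\times_{\mathbb{P}^1} C_2)$.

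Having identified the function fields, I would conclude: $H$ is by definition the desingularization (normalization) of $C_1 \times_{\mathbb{P}^1} C_2$, so $H$ is the unique nonsingular projective curve with function field $k(C_1\times_{\mathbb{P}^1} C_2) = k(C) = k(\tilde{C})$, which is precisely the normalization of $\tilde{C}$. Therefore $C_1 \times_{\mathbb{P}^1} C_2$ and $C$ are birational, and $H \cong \widetilde{C}^{\,\nu}$, the normalization of the projective closure.

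The main obstacle I anticipate is the bookkeeping around $y = 0$: the formulas $y_1 = \tfrac12(y + (\phi_1-\phi_2)/y)$ are only rational functions, so I must check they define an honest birational map rather than being identically undefined, i.e. that $y$ is not the zero function on $C_1\times_{\mathbb{P}^1}C_2$ — but $y = y_1 + y_2 \equiv 0$ would force $y_1 = -y_2$, hence $\phi_1 = y_1^2 = y_2^2 = \phi_2$, contradicting that $C_1$ and $C_2$ share no ramification point (indeed that $\phi_1 \ne \phi_2$). A secondary point requiring a line of care is confirming that the degree-$4$ count is not spoiled by $f$ acquiring a spurious factor: this is where absolute irreducibility of $f$ is used essentially, and it is exactly why the hypothesis is imposed. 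Everything else is the routine algebra already carried out in the paragraph preceding the statement.
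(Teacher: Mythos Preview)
Your argument is correct and follows the same overall strategy as the paper: define the forward map $(x,y_1,y_2)\mapsto(x,y_1+y_2)$ and then exhibit an inverse to establish birationality. The difference lies in how the inverse is built. The paper factors $f$ over $k(\sqrt{\phi_1},\sqrt{\phi_2})$ as
\[
f=\bigl(y^2-(\sqrt{\phi_1}+\sqrt{\phi_2})^2\bigr)\bigl(y^2-(\sqrt{\phi_1}-\sqrt{\phi_2})^2\bigr),
\]
reads off $y=\varepsilon_1\sqrt{\phi_1(x)}+\varepsilon_2\sqrt{\phi_2(x)}$ for suitable signs, and sends $(x,y)$ to $(x,\varepsilon_1\sqrt{\phi_1(x)},\varepsilon_2\sqrt{\phi_2(x)})$. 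Your inverse is instead the explicit rational formula obtained from $(y_1-y_2)(y_1+y_2)=\phi_1-\phi_2$, namely $y_1=\tfrac12\bigl(y+(\phi_1-\phi_2)/y\bigr)$ and $y_2=\tfrac12\bigl(y-(\phi_1-\phi_2)/y\bigr)$. Your version has the advantage of being a manifestly rational map written in closed form (no sign choices or square-root branches to track), and it makes the function-field equality $k(x,y)=k(x,y_1,y_2)$ transparent; the paper's version is closer to a pointwise description. Both reach the same conclusion, and your handling of the $y=0$ issue and the role of absolute irreducibility is appropriate.
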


\begin{proof}
    Clearly $\Phi : C_1 \times_{\mathbb{P}^1} C_2 \dashrightarrow C \ ; \ (x,y_1,y_2) \mapsto (x,y_1+y_2)$ is a well-defined rational map.
    Conversely, the inverse rational map $\Psi : C \dashrightarrow C_1 \times_{\mathbb{P}^1} C_2$ can be constructed as follows:
    Let $(x,y)$ be a point on $C$.
    It follows from
    \[
    \begin{aligned}
        f =&  \left(y^2 - \left( \phi_1 + \phi_2 + 2 \sqrt{\phi_1\phi_2} \right) \right)\left(y^2 - \left( \phi_1 + \phi_2 - 2 \sqrt{\phi_1\phi_2} \right) \right)\\
        = & \left(y^2 - (\sqrt{\phi_1} + \sqrt{\phi_2})^2 \right) \left(y^2 - (\sqrt{\phi_1} - \sqrt{\phi_2})^2 \right) 
    \end{aligned}
    \]
    that we can write $y= \varepsilon_1 \sqrt{\phi_1(x)} + \varepsilon_2 \sqrt{\phi_2(x)}$ for $\varepsilon_i \in \{ -1,1\}$ uniquely.
    Then, clearly the point $(x, \varepsilon_1 \sqrt{\phi_1(x)}, \varepsilon_2 \sqrt{\phi_2(x)})$ lies on $C_1 \times_{\mathbb{P}^1} C_2$, and it is straightforward that $\Phi \circ \Psi = \mathrm{id}_C$ and $\Psi \circ \Phi = \mathrm{id}_{C_1 \times_{\mathbb{P}^1} C_2}$ as rational maps.
    It is also straightforward that the number of points at infinity of $C$ is finite, whence $H$ is isomorphic to the normalization of $\tilde{C}$, as desired.
\end{proof}

To prove the absolute irreducibility and to investigate singularities of $\tilde{C}$, let us write down $f$ more concretely.
For each integer $i$ with $1\leq i \leq 4$, we denote by $\sigma_i$ and $\tau_i$ the degree-$i$ elementary symmetric polynomial on $\alpha_1,\alpha_2,\alpha_3,\alpha_4$ and that on $\beta_1,\beta_2,\beta_3,\beta_4$ respectively, say
\[
\begin{array}{llll}
\sigma_1 := \displaystyle\sum_{i} \alpha_i, & \sigma_2 := \displaystyle\sum_{i<j} \alpha_i \alpha_j, & \sigma_3 := \displaystyle\sum_{i<j<k} \alpha_i \alpha_j \alpha_k, & \sigma_4 := \alpha_1\alpha_2\alpha_3\alpha_4,\\
\tau_1 := \displaystyle\sum_{i} \beta_i, & \tau_2 := \displaystyle\sum_{i<j} \beta_i \beta_j, & \tau_3 := \displaystyle\sum_{i<j<k} \beta_i \beta_j \beta_k, & \tau_4 := \beta_1\beta_2\beta_3\beta_4.
\end{array}
\]
It is straightforward that
\begin{eqnarray*}
    \phi_1 &=& x^4 - \sigma_1 x^3 + \sigma_2 x - \sigma_3 x + \sigma_4,\\
    \phi_2 &=& x^4 - \tau_1 x^3 + \tau_2 x - \tau_3 x + \tau_4,\\
    \phi_1 + \phi_2 &=& 2 x^4 - \left( \sigma_1 + \tau_1 \right) x^3 + \left( \sigma_2 + \tau_2 \right)x^2 - \left( \sigma_3 + \tau_3 \right)x + (\sigma_4 + \tau_4), \\
    \phi_1 - \phi_2 &=& - \left( \sigma_1 - \tau_1 \right) x^3 + \left( \sigma_2 - \tau_2 \right)x^2 - \left( \sigma_3 - \tau_3 \right)x + (\sigma_4 - \tau_4) .
\end{eqnarray*}
Here, we can write
\if 0
In order to prove the irreducibility of $C$, we transform $3$ ramification points of $E_1$ to $0$, $1$, and $-1$, say $\alpha_1 = 0$, $\alpha_2 = 1$, and $\alpha_3 = -1$, so that
\[
\sigma_1 = \alpha, \qquad \sigma_2 = -1,\qquad \sigma_3 = -\alpha,\qquad \sigma_4 = 0.
\]
In this case, $\phi_1+\phi_2$ and $\phi_1 - \phi_2$ are written as follows:
\[
\begin{aligned}
    \phi_1 =& x^4 - \sigma_1 x^3 + \sigma_2 x - \sigma_3 x + \sigma_4,\\
    \phi_2 =& x^4 - \tau_1 x^3 + \tau_2 x - \tau_3 x + \tau_4,\\
    \phi_1 + \phi_2 =& 2 x^4 - \left( \alpha + \tau_1 \right) x^3 + \left( \tau_2 -1 \right)x^2 + \left( \alpha - \tau_3 \right)x + \tau_4, \\
    \phi_1 - \phi_2 =& - \left( \alpha - \tau_1 \right) x^3 - \left( \tau_2+1 \right)x^2 + \left( \alpha + \tau_3 \right)x - \tau_4.
\end{aligned}
\]
\fi
\[
\begin{aligned}
f=& (c_{60} x^6 + c_{42} x^4 y^2) + (c_{50} x^5 + c_{32} x^3 y^2) + (c_{40} x^4 + c_{22}x^2 y^2 + c_{04}y^4)\\
& + (c_{30} x^3 + c_{12}x y^2) + (c_{20} x^2 + c_{02} y^2) + c_{10} x + c_{00}
\end{aligned}
\]
with $c_{42} = -4$ and $c_{04} = 1$, and {the other coefficients are computed as follows:}
\if 0
\begin{align}
  c_{60} =& (\alpha - \tau_1)^2, \label{eq:c60}\\
    c_{50} =&  2 (\alpha - \tau_1) \cdot \left(\tau_2 + 1\right),\label{eq:c50}\\
  c_{32} = & 2(\alpha + \tau_1),   \label{eq:32} \\
     c_{40} =& - 2 \left(\alpha - \tau_1 \right)\left(\alpha + \tau_3 \right)+ \left(\tau_2 + 1 \right)^2, \label{eq:c40} \\
  c_{22} = & \textcolor{red}{-}2\left(\tau_2 - 1\right), \label{eq:c22} \\
  c_{30} = & 2 (\alpha - \tau_1) \tau_4  - 2  (\tau_2 + 1) \textcolor{red}{(\alpha + \tau_3)},\label{eq:c30}\\
  c_{12} =& -2  \left( \alpha - \tau_3  \right), \label{eq:c12} \\
   c_{20} =&  \textcolor{red}{(\alpha +\tau_3)^2} + 2 \tau_4 (\tau_2 + 1), \label{eq:c20} \\
  c_{02} =& - 2 \tau_4,\\
   c_{10} =& - 2 \tau_4 \left( \alpha +\tau_3 \right), \label{eq:c10}\\
  c_{00} =& \tau_4^2 \label{eq:c00}.
\end{align}
\fi
\begin{align}
  c_{60} =& (\sigma_1 - \tau_1)^2,\label{eq:c60}\\
    c_{50} =&  -2 (\sigma_1 - \tau_1) \cdot \left(\sigma_2 - \tau_2\right),\\ 
  c_{32} = & 2(\sigma_1 + \tau_1),\\  
     c_{40} =& 2 \left(\sigma_1 - \tau_1 \right)\left(\sigma_3 - \tau_3 \right)+ \left(\sigma_2-\tau_2 \right)^2, \label{eq:c40} \\
  c_{22} = & {-}2\left(\sigma_2 + \tau_2\right), \\ 
  c_{30} = & -2 (\sigma_1 - \tau_1) (\sigma_4-\tau_4)  - 2  (\sigma_2 - \tau_2 ) {(\sigma_3 - \tau_3)},\\ 
  c_{12} =& 2  \left( \sigma_3 + \tau_3  \right),\label{eq:c12} \\
   c_{20} =&  {(\sigma_3 -\tau_3)^2} + 2 (\sigma_4-\tau_4) (\sigma_2-\tau_2), \\
  c_{02} =& - 2 (\sigma_4+\tau_4),\\
   c_{10} =& - 2 (\sigma_4-\tau_4) \left( \sigma_3 -\tau_3 \right), \label{eq:c10}\\
  c_{00} =& (\sigma_4-\tau_4)^2. \label{eq:c00}
\end{align}

\begin{remark}\label{rem:3points}
    Considering M\"{o}bius transformations, we can fix arbitrary $3$ elements among the $8$ elements $\alpha_i$'s and $\beta_j$'s, e.g., $(\alpha_1,\alpha_2,\alpha_3) = (0,1,-1)$.
    This implies that the non-hyperelliptic Howe curves of genus-$5$ associated with two genus-$1$ curves form a family of at most $5$ dimension in the moduli space of curves of genus $5$.
\end{remark}

\begin{remark}\label{rem:gen}
    The above construction of $f$ works for Howe curves of other genus associated with hyperelliptic curves sharing no ramification points, but the absolute irreducibility of $f$ should be checked in each case.
\end{remark}

\subsection{Proof of the irreducibility}

Here, we shall prove that the sextic $f$ constructed in the previous subsection is absolutely irreducible.
Note that $c_{60} = c_{40} = c_{20} = c_{00}=0$ does not hold, since $\alpha_i$'s and $\beta_j$'s are mutually distinct.

\begin{proposition}\label{prop:irr}
With notation as above, $f$ is irreducible over $k$ for every $(\alpha_1, \alpha_2,\alpha_3,\alpha_4, \beta_1, \beta_2,\beta_3, \beta_4)$ with pairwise distinct elements $\alpha_i$'s and $\beta_j$'s in $k$.
\end{proposition}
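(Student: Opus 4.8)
The plan is to argue by contradiction: suppose $f$ factors nontrivially over $k$. Since $f$ has the shape to which Lemma~\ref{lem:factor} applies (with $b_{42}=c_{42}=-4$, $b_{04}=c_{04}=1$, and $f(x,0)=(\phi_1-\phi_2)^2\not\equiv 0$ because $\alpha_i$'s and $\beta_j$'s are distinct so $\phi_1\neq\phi_2$), any factorization must be of type \textbf{(A)} or type \textbf{(B)}. I would then eliminate each type by comparing coefficients with the explicit formulas \eqref{eq:c60}--\eqref{eq:c00}.

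For type \textbf{(B)}, the factorization reads $f = H_1 H_2$ with $H_1 = y^2 + (2x^2 + a_1 x + a_2)y + (a_3 x^3 + a_4 x^2 + a_5 x + a_6)$ and $H_2$ its ``conjugate'' with the sign of the $y$-coefficient reversed. Comparing the expansion given in Lemma~\ref{lem:factor}\textbf{(B)} with $f$ term by term: the $x^5 y^2$-coefficient gives $-4a_1 + 2a_3 = c_{32} = 2(\sigma_1+\tau_1)$, and more importantly the odd-in-$x$-and-$y$ structure has to match. The cleanest route: note that in case \textbf{(B)} the product $H_1 H_2 = (y^2 + g_1(x))^2 - (2x^2 + a_1 x + a_2)^2 y^2$ where $g_1 = a_3x^3+a_4x^2+a_5x+a_6$; but our $f = y^4 - 2(\phi_1+\phi_2)y^2 + (\phi_1-\phi_2)^2$ can also be written as $(y^2 - (\phi_1+\phi_2))^2 - 4\phi_1\phi_2$. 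Matching these two ``difference of squares'' presentations forces $g_1 = -(\phi_1+\phi_2)$, which has degree $4$, contradicting $\deg g_1 \le 3$. (Alternatively, just observe $c_{60} = a_3^2$ and the $x^6$-term of $\phi_1+\phi_2$ squared is $4 \ne a_3^2$ unless $a_3 = \pm(\sigma_1-\tau_1)$, then chase the remaining coefficients to a contradiction — but the difference-of-squares argument is faster and I would present that.)

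For type \textbf{(A)}, we have $H_1 = y^2 + (a_1 x^2 + a_2 x + a_3)$ and $H_2 = y^2 + (-4x^4 + a_4 x^3 + a_5 x^2 + a_6 x + a_7)$. Here $H_1 H_2 = (y^2 + \tfrac{1}{2}(h_1+h_2))^2 - \tfrac{1}{4}(h_1 - h_2)^2$ where $h_1 = a_1x^2+a_2x+a_3$ and $h_2 = -4x^4+a_4x^3+a_5x^2+a_6x+a_7$; comparing with $f = (y^2 - (\phi_1+\phi_2))^2 - 4\phi_1\phi_2$ gives $h_1 + h_2 = -2(\phi_1+\phi_2)$ and $(h_1-h_2)^2 = 16\phi_1\phi_2$. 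The first equation determines $h_1$ and $h_2$ completely in terms of $\sigma_i,\tau_i$ (splitting $-2(\phi_1+\phi_2)$ into its degree-$\le 2$ part $h_1$ and its degree-$\ge 3$ part $h_2$, noting the $x^4$-coefficient is $-4$ as required): explicitly $h_1 = -2(\sigma_2+\tau_2)x^2 + 2(\sigma_3+\tau_3)x - 2(\sigma_4+\tau_4)$ and $h_2 = -4x^4 + 2(\sigma_1+\tau_1)x^3$. Then $h_1 - h_2 = 4x^4 - 2(\sigma_1+\tau_1)x^3 - 2(\sigma_2+\tau_2)x^2 + 2(\sigma_3+\tau_3)x - 2(\sigma_4+\tau_4) = -2(\phi_1+\phi_2) + \text{(something)}$; in fact $h_1 - h_2 = 4x^4 - 2(\phi_1+\phi_2) + (\text{low order})$, and the condition $(h_1-h_2)^2 = 16\phi_1\phi_2$ becomes a polynomial identity in $x$ that I expect to fail. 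The main obstacle, and the one step I would actually grind, is verifying that $(h_1-h_2)^2 = 16\phi_1\phi_2$ is impossible: I would do this by evaluating at the roots — $\phi_1\phi_2$ vanishes (to order exactly $1$, since $\phi_1,\phi_2$ are separable with disjoint root sets) at each $\alpha_i$ and each $\beta_j$, so $16\phi_1\phi_2$ has $8$ simple roots, whereas $(h_1-h_2)^2$ is a perfect square and hence has every root with even multiplicity; since $h_1 - h_2$ is a nonzero polynomial of degree $4$ (leading term $4x^4$), $(h_1-h_2)^2$ has degree $8$ with all roots of even multiplicity, which cannot equal a degree-$8$ polynomial with $8$ distinct roots. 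This contradiction rules out type \textbf{(A)}, completing the proof.

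Finally I should note that the reducible case that produces three quadratic factors is, by Lemma~\ref{lem:3quad}, already subsumed under type \textbf{(A)}, so no separate treatment is needed. Thus every possible nontrivial factorization has been excluded, and $f$ is absolutely irreducible.
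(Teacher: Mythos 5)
Your case \textbf{(A)} argument is sound and is in fact cleaner than the paper's: from $H_1H_2=y^4+(h_1+h_2)y^2+h_1h_2$ and $f=y^4-2(\phi_1+\phi_2)y^2+(\phi_1-\phi_2)^2$ one gets $h_1+h_2=-2(\phi_1+\phi_2)$ and $h_1h_2=(\phi_1-\phi_2)^2$, hence $(h_1-h_2)^2=(h_1+h_2)^2-4h_1h_2=16\phi_1\phi_2$; since $h_1-h_2$ is a nonzero quartic, its square has all roots of even multiplicity, while $16\phi_1\phi_2$ has eight simple roots. (Your intermediate claim that $h_1+h_2=-2(\phi_1+\phi_2)$ ``determines $h_1$ and $h_2$ completely'' is false --- the degree-$\le 2$ coefficients can be distributed between them arbitrarily --- but that claim is not used in the multiplicity argument, so no harm is done.) The paper instead normalizes $\alpha_1=0$ and chases the coefficients of $y^2$, $1$, $xy^2$ and $x$ to force $\sigma_3=0$; your route avoids all of that.

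The genuine gap is in case \textbf{(B)}. You write $H_1H_2=(y^2+g_1)^2-L^2y^2$ with $L=2x^2+a_1x+a_2$ and assert that matching this against $f=(y^2-(\phi_1+\phi_2))^2-4\phi_1\phi_2$ ``forces $g_1=-(\phi_1+\phi_2)$,'' a degree-$4$ polynomial, contradicting $\deg g_1\le 3$. That matching is not legitimate: the two difference-of-squares presentations are not of the same shape (one subtracts a multiple of $y^2$, the other a polynomial in $x$ alone), so their pieces need not correspond. Comparing genuine coefficients of powers of $y$ gives $g_1^2=(\phi_1-\phi_2)^2$, i.e.\ $g_1=\pm(\phi_1-\phi_2)$, which has degree $\le 3$ --- no contradiction, and your degree obstruction evaporates. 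The argument can be repaired in the same spirit: the $y^2$-coefficient comparison reads $2g_1-L^2=-2(\phi_1+\phi_2)$, so $L^2=2g_1+2(\phi_1+\phi_2)=4\phi_1$ or $4\phi_2$; this is impossible because $\phi_1$ and $\phi_2$ are separable quartics and hence not squares of quadratics. With that replacement your proof is complete and is genuinely simpler than the paper's treatment of case \textbf{(B)}, which splits into four sign subcases and relies on computer-assisted factorizations of the resulting relations.
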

   
\begin{proof}
If $f$ were reducible, then we could factor it into the product of two polynomials $H_1$ and $H_2$ in $k[x,y]$ as in {\bf (A)} or {\bf (B)} of Lemma \ref{lem:factor}.
It suffices to prove the irreducibility in the case where one of $\alpha_i$'s and $\beta_j$'s is equal to $0$, for example $\alpha_1 = 0$.
Indeed, we have $f(x+\alpha_1,y) = y^4 - 2(\overline{\phi}_1 + \overline{\phi}_2) y^2 + (\overline{\phi}_1 - \overline{\phi}_2)^2$ for $\overline{\phi}_1=\phi_1(x+\alpha_1)$ and $\overline{\phi}_2=\phi_2(x+\alpha_1)$, which implies that $f(x+\alpha_1,y)$ has the same form as of $f(x,y)$.
Moreover, if $f(x,y) = H_1(x,y) H_2(x,y)$, then $f (x+\alpha_1,y) = H_1(x+\alpha_1,y) H_2(x+\alpha_1,y)$, so that $f(x+\alpha_1,y)$ is reducible.

First, consider the case {\bf (A)}.
We take $\alpha_1 = 0$, so that any of other $\alpha_i$'s and $\beta_j$'s is not zero.
Comparing the $y^2$-coefficient and the constant term of $f$ with those of $H_1H_2$, we have $a_3 + a_7 = c_{02} = - 2 (\sigma_4 + \tau_4) = -2 \tau_4$ and $a_3 a_7 = c_{00} = (\sigma_4 - \tau_4)^2 = \tau_4^2$, where we use $\sigma_4 = 0$ by $\alpha_1 = 0$.
Therefore, the elements $a_3$ and $a_7$ are the roots of 
\[
X^2 -c_{02} X +  c_{00} = (X + \tau_4)^2 = (X-c_{02}/2)^2,
\]
so that $a_3 = a_7 = -\tau_4 = c_{02}/2$.
Also from the coefficients of $x y^2$ in $f$ and $H_1H_2$, we have $a_2 + a_6 = c_{12}$, and thus
\[
    \begin{aligned}
        a_2 a_7 + a_3 a_6 = - \tau_4 (a_2+a_6) =  -2\tau_4 (\sigma_3 + \tau_3) 
    \end{aligned}
\]
by \eqref{eq:c12}.
On the other hand, it follows from $c_{10}=a_2 a_7 + a_3 a_6$ and \eqref{eq:c10} together with $\tau_4 \neq 0$ that $\sigma_3 + \tau_3 = - (\sigma_3 - \tau_3)$, whence $\sigma_3 = \alpha_2 \alpha_3 \alpha_4= 0$, a contradiction.
\if 0
, say
     \begin{eqnarray*}
        H_1H_2 \!\!\!&=& \!\!\! (-4 a_1 x^6 - 4 x^4 y^2) + ((a_1 a_4 - 4a_2)x^5 + a_4 x^3 y^2) \\
        & & + ((a_1 a_5 + a_2 a_4 - 4 a_3 ) x^4 + (a_1 + a_5) x^2 y^2 + y^4)\\
        && + ((a_1 a_6 + a_2 a_5 + a_3 a_4) x^3 + (a_2 + a_6) x y^2) \\
        && + ((a_1 a_7 + a_2 a_6 + a_3 a_5 ) x^2  + (a_3 + a_7) y^2) + (a_2 a_7 + a_3 a_6)x + a_3 a_7,
    \end{eqnarray*}
    where $a_i \in k$ for $1 \leq i \leq 7$.
    \begin{itemize}
        \item $a_4 = c_{32}$, $a_1 = -c_{60}/4$, and thus $a_5 = c_{22} + c_{60}/4$, and
        \[
        a_2 = -(c_{50} - a_1 a_4)/4 = - \frac{1}{4} c_{50} + \frac{1}{16} c_{60}c_{32} = -\frac{1}{16}(c_{60}c_{32} - 4 c_{50}),
        \]
        and
        \[
        a_6 = c_{12} - a_2 = \frac{1}{16}(16 c_{12} + c_{60}c_{32} - 4 c_{50}),
        \]
        \item Since $a_3 + a_7 = c_{02} = - 2 \beta_1 \beta_2 \beta_3 \beta_4$ and $a_3 a_7 = c_{00} = \beta_1^2 \beta_2^2 \beta_3^2 \beta_4^2$, the elements $a_3$ and $a_7$ are the roots of 
        \[
        X^2 -c_{02} X +  c_{00} = (X + \beta_1 \beta_2 \beta_3 \beta_4)^2 = (X-c_{02}/2)^2,
        \]
        so that $a_3 = a_7 = -\beta_1 \beta_2 \beta_3 \beta_4 = c_{02}/2$.
        
        \item Once $a_1$, $a_3$, $a_4$, $a_5$, and $a_7$ are specified, the elements $a_2$ and $a_6$ are also determined as follows.
        Form the coefficients of $x y^2$, we have
        \[
        a_2 + a_6 = c_{12} = -2 (\alpha_4 - \beta_1 \beta_2 \beta_3 - \beta_1 \beta_2 \beta_4 - \beta_1 \beta_3 \beta_4 - \beta_2 \beta_3 \beta_4)
        \]
        and thus
        \[
        \begin{aligned}
            a_2 a_7 + a_3 a_6 =& - \beta_1 \beta_2 \beta_3 \beta_4 (a_2+a_6)\\
            = & 2\beta_1 \beta_2 \beta_3 \beta_4 (\alpha_4 - \beta_1 \beta_2 \beta_3 - \beta_1 \beta_2 \beta_4 - \beta_1 \beta_3 \beta_4 - \beta_2 \beta_3 \beta_4).
        \end{aligned}
        \]
        On the other hand, 
        \[
        c_{10} = - 2 \beta_1 \beta_2 \beta_3 \beta_4 ( \alpha_4 + \beta_1 \beta_2 \beta_3 + \beta_1 \beta_2 \beta_4 + \beta_1 \beta_3 \beta_4 + \beta_2 \beta_3 \beta_4 ),
        \]
        whence $\alpha_4 = - \alpha_4$, so that $\alpha_4 = 0$, a contradiction.

        

    \end{itemize}
\fi

Next, we consider the case {\bf (B)}, and assume $\alpha_1 = 0$.
By \eqref{eq:c60} and \eqref{eq:c00}, we can determine $a_3$ and $a_6$ from the $x^6$-coefficients and the constant terms in $f$ and $H_1H_2$.
Specifically, there are $4$ cases:    
\[
\begin{aligned}
    {\bf (B1)} \
    \begin{cases}
        a_{3} = \sigma_1  - \tau_1,\\
        a_6 = \sigma_4 - \tau_4.
    \end{cases}\quad \ \
    &
    {\bf (B2)} \
    \begin{cases}
        a_{3} = -(\sigma_1 - \tau_1),\\
        a_6 = \sigma_4 - \tau_4.
    \end{cases}\\
     {\bf (B3)} \
    \begin{cases}
        a_{3} = \sigma_1 - \tau_1,\\
        a_6 = -(\sigma_4 - \tau_4).
    \end{cases}
    &
    {\bf (B4)} \
    \begin{cases}
        a_{3} = -(\sigma_1 - \tau_1),\\
        a_6 = -(\sigma_4 - \tau_4).
    \end{cases}
\end{aligned}
\]
Note that $a_6 \neq 0$ by $\alpha_1 = 0$ and $0 = \sigma_4 \neq \tau_4$.
Once the values of $a_3$ and $a_6$ are given, the other $a_i$'s can be also computed by comparing coefficients of $f$ with ones of $H_1H_2$ as follows:
 \[
    \left\{
    \begin{array}{ll}
    a_1 = -(c_{32} - 2a_3)/4 & \mbox{from the coefficients of $x^3 y^2$},\\
    a_2 =  (2 a_4 - a_1^2-c_{22})/4 & \mbox{from the coefficients of $x^2 y^2$},\\
    a_5 = c_{10}/(2a_6) & \mbox{from the coefficients of $x$}.
    \end{array}
    \right.
\]
Here, $a_4$ is determined by $a_4 = c_{50}/(2a_3)$ from the coefficients of $x^5$ if $a_3 \neq 0$, and by $a_4 = \pm \sqrt{c_{40}}$ from $q_1$ below if $a_3 = 0$.
Comparing the other coefficients in $f$ and $H_1 H_2$, we derive the following system of equations:
\[
    \left\{
    \begin{array}{llll}
    q_1 :=& 2 a_3 a_5 + a_4^2 - c_{40} & = 0 & \mbox{from the coefficient of $x^4$},\\
    q_2 := & 2 a_3 a_6 + 2 a_4 a_5 - c_{30} & = 0 & \mbox{from the coefficient of $x^3$},\\
    q_3 := & -2 a_1 a_2 + 2 a_5 - c_{12} & = 0 & \mbox{from the coefficient of $x y^2$},\\
    q_4 := & 2 a_4 a_6 + a_5^2 - c_{20} & = 0 & \mbox{from the coefficient of $x^2$},\\
    q_5 := & -a_2^2 + 2 a_6 - c_{02} & = 0 &  \mbox{from the coefficient of $y^2$}.
    \end{array}
    \right.
\]

We assume $a_3 \neq 0$, and derive a contradiction in each case of {\bf (B1)} -- {\bf (B4)}.
\begin{itemize}
    \item In the case {\bf (B1)}, \textcolor{black}{a straightforward computation with a computer shows}
\[
\left\{
\begin{aligned}
    q_1 =& -4 (\sigma_1 - \tau_1)(\sigma_3 - \tau_3),\\
    q_2 = & 4((\sigma_2 - \tau_2)(\sigma_3 - \tau_3) + (\sigma_1 - \tau_1)(\sigma_4 - \tau_4)),\\
    q_3 =& -\frac{1}{2} (8 \sigma_3 + \tau_1 (\tau_1^2 - 4 \tau_2)),\\
    q_4 = & -4 (\sigma_4 - \tau_4) (\sigma_2 - \tau_2),\\
    q_5 =& -\frac{1}{16}( - 64 \sigma_4 + (\tau_1^2 - 4 \tau_2)^2 ).
\end{aligned}
\right.
\]
From $q_3 = q_5 = 0$, we can represent $\sigma_3$ and $\sigma_4$ as polynomials in $\tau_1$ and $\tau_2$.
Also by $q_1 = 0$, the cases are divided into $3$ cases:
$\sigma_1 = \tau_1$ but $\sigma_3 \neq \tau_3$, $\sigma_1 \neq \tau_1$ but $\sigma_3 = \tau_3$, or $\sigma_1 = \tau_1$ and $\sigma_3 = \tau_3$.
For the first case, it follows from $q_2=0$ that $\sigma_2 = \tau_2$, whence $\phi_1(x)$ is equal to 
\[
    \begin{aligned}
        & x^4 - \tau_1 x^3 +\tau_2 x^2 + \frac{\tau_1(\tau_1^2 - 4 \tau_2)}{8} x + \frac{(\tau_1^2 -4 \tau_2)^2}{64} = {\left(x^2 -\frac{\tau_1}{2} x -\frac{(\tau_1^2-4\tau_2)}{8}\right)^2 }.
    \end{aligned}
\]
This implies $\alpha_i = \alpha_j$ for some $i$ and $j$ with $i \neq j$, a contradiction.
{As for the other two cases, we can derive a contradiction similarly.}

\item In the case {\bf (B4)}, we obtain the same formulae of $q_1$, $q_2$, and $q_4$ as in the case {\bf (B1)}, {and} 
\[
q_5 =  - \frac{1}{16}( (\sigma_1^2 - 4 \sigma_2)^2 - 64 \tau_4 ),\qquad q_3 = - \frac{1}{2} (8 \tau_3 + \sigma_1 (\sigma_1^2 - 4 \sigma_2) )
\]
by a \textcolor{black}{computer} calculation.
Here, $q_1$, $q_2$, $q_3$, $q_4$, and $q_5$ in this case coincide with those in the case {\bf (B1)} if $\sigma_i$ and $\tau_i$ are exchanged for each $i$.
Thus, we can derive a contradiction similarly to the case {\bf (B1)}.
\item For the case {\bf (B2)}, we can factor $q_3$ {as}
\[
q_3 = -\frac{1}{2}(\alpha_1 - \alpha_2 - \alpha_3 + \alpha_4)(\alpha_1 - \alpha_2 + \alpha_3 - \alpha_4) (\alpha_1 + \alpha_2 - \alpha_3 - \alpha_4),
\]
whence $\alpha_1 + \alpha_2 = \alpha_3 + \alpha_4$, $\alpha_1 + \alpha_3 = \alpha_2 + \alpha_4$, or $\alpha_1 + \alpha_4 = \alpha_2 + \alpha_3$.
A tedious computation with a \textcolor{black}{computer} shows that
\[
q_5 = 
\left\{
\begin{array}{ll}
    - (\alpha_2 - \alpha_3)^2 (\alpha_2 - \alpha_4)^2 & \mbox{if $\alpha_1 + \alpha_2 = \alpha_3 + \alpha_4$},\\
     -(\alpha_2 - \alpha_3)^2 (\alpha_3 - \alpha_4)^2 & \mbox{if $\alpha_1 + \alpha_3 = \alpha_2 + \alpha_4$},\\
      -(\alpha_2 - \alpha_4)^2 (\alpha_3 - \alpha_4)^2 & \mbox{if $\alpha_1 + \alpha_4 = \alpha_2 + \alpha_3$},
\end{array}
\right.
\]
each of which should not be zero by our assumption $\alpha_i \neq \alpha_j$ for any $i \neq j$.
\item Also for the case {\bf (B3)}, we can {obtain} factorization formulae similar to ones in the case {\bf (B2)}, which derive a contradiction to our assumption $\beta_i \neq \beta_j$ for any $i \neq j$.
\end{itemize}

Finally, we assume $a_3 = 0$ (and $\alpha_1 = 0$ continuously), namely $\sigma_1 = \tau_1$.
In this case, it follows from \eqref{eq:c40} that $c_{40} = (\sigma_2 - \tau_2)^2$, whence we can take $a_4 = \pm (\sigma_2 - \tau_2)$.
From this together with $a_6 = \pm (\sigma_4 - \tau_4)$, the case is divided into four cases.
In each of the four cases, we can derive a contradiction by a tedious computation, similarly to the cases {\bf (B1)} -- {\bf (B4)} with $a_3 \neq 0$.

\if 0
Focusing on $q_1 = 0$, we can divide the cases into $\alpha_4 = \beta_1 +\beta_2 + \beta_3 + \beta_4$ or $\alpha_4 + \beta_1 \beta_2 \beta_3 + \beta_1 \beta_2 \beta_4 + \beta_1 \beta_3 \beta_4 + \beta_2 \beta_3 \beta_4=0$.
\begin{itemize}
    \item If $\alpha_4 = \beta_1 +\beta_2 + \beta_3 + \beta_4$, then $\beta_i$'s are the roots of
    \[
    \begin{aligned}
        & X^4 - \alpha_4 X^3 - X^2 + \frac{1}{8}(\alpha_4^3 + 4 \alpha_4) X + \frac{1}{64} (\alpha_4^4 + 8 \alpha_4^2 + 16)\\
        = & X^4 - \alpha_4X^3 - X^2 + \frac{1}{8} \alpha_4 (\alpha_4^2 + 4) X + \frac{1}{64} (\alpha_4^2+4)^2\\
        = & \left(X^2 -\frac{1}{2} \alpha_4 X -\frac{1}{8} (\alpha_4^2+4)\right)^2 ,
    \end{aligned}
    \]
    whence $\beta_i = \beta_j$ for some $i$ and $j$ with $i \neq j$, a contradiction.
    
    \item Also in the case where $\alpha_4 + \beta_1 \beta_2 \beta_3 + \beta_1 \beta_2 \beta_4 + \beta_1 \beta_3 \beta_4 + \beta_2 \beta_3 \beta_4=0$, we can derive $\alpha_4 = \beta_1 +\beta_2 + \beta_3 + \beta_4$ as follows:
    It follows from $q_3 = 0$ that
    \[
    \begin{aligned}
        & 8 (\alpha_4 + \beta_1 \beta_2 \beta_3 + \beta_1 \beta_2 \beta_4 + \beta_1 \beta_3 \beta_4 + \beta_2 \beta_3 \beta_4)\\
        =& 8 \alpha_4 - (\alpha_4^3 + 4\alpha_4) = - \alpha_4(\alpha_4 -2)(\alpha_4 + 2),
    \end{aligned}
    \]
    so that $\alpha_4 = \pm 2$.
    It also follows from $q_5 =  0$ that
    \[
    \alpha_4^4 + 8 \alpha_4^2 - 64 \beta_1 \beta_2 \beta_3 \beta_4 + 16 = 64 (1 - \beta_1 \beta_2 \beta_3 \beta_4) = 0,
    \]
    whence $\beta_1\beta_2\beta_3\beta_4=1$.
    From this together with $q_4=0$, we obtain
    \[
    q_2 = 4 (- \alpha_4 + \beta_1 + \beta_2 + \beta_3 + \beta_4) = 0,
    \]
    and thus $\beta_1 + \beta_2 + \beta_3 + \beta_ 4 = \alpha_4$.
    As discussed above, two of $\beta_i$'s are equal to each other, a contradiction.
\end{itemize}
\fi
\if 0
Focusing on $q_1 = 0$, we can divide the cases into $\alpha_4 = \beta_1 +\beta_2 + \beta_3 + \beta_4$ or $\alpha_4 + \beta_1 \beta_2 \beta_3 + \beta_1 \beta_2 \beta_4 + \beta_1 \beta_3 \beta_4 + \beta_2 \beta_3 \beta_4=0$.
\begin{itemize}
    \item If $\alpha_4 = \beta_1 +\beta_2 + \beta_3 + \beta_4$, it follows from $q_5 = 0$ that $q_3 = 4 \alpha_4 = 0$, which contradicts our assumption $\alpha_4 \neq 0$.
\end{itemize}
\fi
\end{proof}

\begin{remark}
    In the proof of Proposition \ref{prop:irr}, we used Magma~\cite{bosma1997magma} for some symbolic computations, which can be of course conducted by hand or by other computer algebra systems (we did not use any function specific to Magma).
\end{remark}

\section{Singularity analysis and concrete examples}

We use the same notation as in the previous section; for example, let $k$ be an algebraically closed field of characteristic $p$ with $p =0$ or $p \geq 5$.
We also denote by $\tilde{C}$ the projective closure of $C$ in $\mathbb{P}^2$.
In this section, the possible number of singular points on $\tilde{C}$ is determined.
Some concrete examples are also provided.

\subsection{Singularity analysis}\label{subsec:sing}

Let $F$ be the homogenization of $f$ by an extra variable $z$, say 
\[
\begin{aligned}
F=& (c_{60} x^6 + c_{42} x^4 y^2) + (c_{50} x^5 + c_{32} x^3 y^2) z + (c_{40} x^4 + c_{22}x^2 y^2 + c_{04}y^4) z^2 \\
& + (c_{30} x^3 + c_{12}x y^2) z^3 + (c_{20} x^2 + c_{02} y^2)z^4 + c_{10} xz^5 + c_{00}z^6.
\end{aligned}
\]
Then, $\tilde{C}$ is the locus $F=0$ in $\mathbb{P}^2 = \mathrm{Proj}(k[x,y,z])$.
By degree-genus formula, the arithmetic genus of $\tilde{C}$ is $g_a(\tilde{C})=10$, and thus $\tilde{C}$ has a singular point.
Let $\mathrm{Sing}(\tilde{C})$ denote the set of singular points on $\tilde{C}$ in $\mathbb{P}^2$, namely
\[
\mathrm{Sing}(\tilde{C}) = \{ P \in \mathbb{P}^2 : F(P)= F_x(P) = F_y(P) = F_z(P) = 0 \},
\]
where $F_x := \frac{\partial F}{\partial x}$, $ F_y:= \frac{\partial F}{\partial y}$, and $F_z:=\frac{\partial F}{\partial z}$.
Note that it suffices to consider the locus $F_x = F_y = F_z = 0$ by Euler's relation $\mathrm{deg}(F) F = x F_x + y F_y + z F_z$ with $\mathrm{deg}(F) = 6$ and $p \neq 2,3$.
Singular points $(x:y:z)$ with $z=1$ are called affine singular points, while those with $z=0$ are singular points at infinity.
In the following, we shall determine the singular locus $\mathrm{Sing}(\tilde{C})$ explicitly, by an arithmetic method.

First, denoting by $m_{P}$ the multiplicity of $\tilde{C}$ at a point $P$, we have 
\begin{equation}\label{inequality:genus-multiplicity}
g(H) \le g_a(\tilde{C}) - \sum_{P\in \mathrm{Sing}(\tilde{C})} \frac{m_{P}(m_{P}-1)}{2},
\end{equation}
so that $\tilde{C}$ has at most $5$ singular points.

We start with determining singular points at infinity and their multiplicities.

\begin{lemma}\label{lem:sing_inf}
With notation as above, the singular points on $\tilde{C}$ at infinity are $(1:0:0)$ or $(0:1:0)$ only.
Among the two points, $(0:1:0)$ is always a singular point on $\tilde{C}$, whereas $(1 : 0 : 0) \in \mathrm{Sing}(\tilde{C})$ if and only if $c_{60}=c_{50} = 0$, equivalently $\sigma_1 = \tau_1$ $($i.e., $\alpha_1 + \alpha_2 + \alpha_3 + \alpha_4 =\beta_1 + \beta_2 + \beta_3 + \beta_4)$.

Moreover, each of the singularities $(1:0:0)$ and $(0:1:0)$ on $\tilde{C}$ has multiplicity $2$.
\end{lemma}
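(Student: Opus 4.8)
The plan is to work in the two standard affine charts covering the line at infinity $z=0$, and to identify all singular points there directly from the explicit form of $F$. Points at infinity on $\tilde C$ satisfy $z=0$, so the top-degree part $c_{60}x^6 + c_{42}x^4y^2 = c_{60}x^6 - 4x^4y^2 = x^4(c_{60}x^2 - 4y^2)$ must vanish. Hence any point at infinity is $(1:0:0)$ (from $x^4=0$ giving $x=0$, i.e.\ the point $(0:1:0)$) — wait, more carefully: setting $z=0$ in $F$ leaves $x^4(c_{60}x^2-4y^2)=0$, so either $x=0$, giving $(0:1:0)$, or $c_{60}x^2 = 4y^2$. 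First I would show that points of the second type with $y\neq 0$ are \emph{not} singular: at such a point $F_y = 2(c_{60}x^6/y)\cdot(\text{something})$ — rather, I would compute $F_z$ at $z=0$, which equals $(c_{50}x^5 + c_{32}x^3y^2)$, and $F_y$ at $z=0$, which equals $2c_{42}x^4y = -8x^4 y$; for $x,y\neq0$ this is nonzero, so such points are smooth. Thus the only candidates at infinity are $(0:1:0)$ and $(1:0:0)$.

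Next I would analyze $(0:1:0)$ using the chart $y=1$, where the curve is $\hat F(x,z) := F(x,1,z)$. The lowest-degree terms of $\hat F$ in $(x,z)$ near the origin come from $c_{42}x^4 + c_{04}z^2 + c_{32}x^3 z + c_{22}x^2 z^2 + \cdots$; since $c_{04}=1$, the lowest-degree form is $z^2$ up to... no: $c_{42}x^4$ has degree $4$ while $c_{04}z^2$ has degree $2$, so the lowest-order part is $z^2$ plus possibly $c_{02}z^4$ etc. — the tangent cone is $z^2 = 0$ (degree $2$), so $(0:1:0)$ is always a singular point of multiplicity exactly $2$. Here the key computation is just reading off that the order of vanishing of $\hat F$ at the origin is $2$: the constant and linear terms vanish (no constant term, no $x$ or $z$ term), the quadratic part is $c_{04}z^2 = z^2 \neq 0$.

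For $(1:0:0)$ I would use the chart $x=1$, writing $\check F(y,z):=F(1,y,z) = c_{60} + c_{42}y^2 + (c_{50}+c_{32}y^2)z + (c_{40}+c_{22}y^2+c_{04}y^4)z^2 + \cdots + c_{00}z^6$. The point $(1:0:0)$ corresponds to the origin $(y,z)=(0,0)$; it lies on $\tilde C$ iff $c_{60}=0$. Its constant term is $c_{60}$, the coefficient of $z$ is $c_{50}$, the coefficient of $y$ is $0$, the coefficient of $y^2$ is $c_{42}=-4\neq 0$. So the origin is on the curve and singular iff $c_{60}=c_{50}=0$, and in that case the quadratic part is $-4y^2 + c_{40}z^2 + \cdots$, which is nonzero, so the multiplicity is exactly $2$. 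Finally I would translate $c_{60}=c_{50}=0$ into the condition on the roots: by \eqref{eq:c60}, $c_{60}=(\sigma_1-\tau_1)^2$, so $c_{60}=0 \iff \sigma_1=\tau_1$, and then $c_{50}=-2(\sigma_1-\tau_1)(\sigma_2-\tau_2)=0$ automatically, so the single condition $\sigma_1=\tau_1$ is equivalent to $(1:0:0)\in\mathrm{Sing}(\tilde C)$. The main (mild) obstacle is just bookkeeping the lowest-order terms correctly in each chart and being careful that the ``at infinity'' analysis of $F$ with $z=0$ genuinely rules out the branch $c_{60}x^2=4y^2$ with $y\neq0$; none of this requires heavy computation.
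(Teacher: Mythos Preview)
Your proposal is correct and follows essentially the same approach as the paper: the paper computes $F_x(x,y,0)$, $F_y(x,y,0)$, $F_z(x,y,0)$ directly (finding $F_y(x,y,0)=2c_{42}x^4y$, which isolates $(0:1:0)$ and $(1:0:0)$ just as you do), and then uses $F_{yy}(x,y,0)=2c_{42}x^4$ and $F_{zz}(0,y,0)=2c_{04}y^4$ to pin down multiplicity~$2$, which is exactly your tangent-cone computation in the charts $x=1$ and $y=1$ rephrased via second partials. The only cosmetic difference is that the paper stays in homogeneous coordinates throughout rather than passing to affine charts.
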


\begin{proof}
It is straightforward that $F_x(x,y,0) = 6 c_{60} x^5 + 4 c_{42} x^3 y^2$, $F_y(x,y,0)= 2 c_{42} x^4 y$, and $F_z(x,y,0) = c_{50} x^5 + c_{32} x^3 y^2$
with $c_{42}=-4\neq 0$, from which the assertions hold clearly.
The assertion on the multiplicity follows from $F_{yy}(x,y,0) =2c_{42} x^4$ and $F_{zz}(0,y,0) = 2c_{04} y^4$ together with $c_{42} = -4 \neq 0$ and $c_{04} = 1 \neq 0$.
\end{proof}

Next, consider affine singular points.
Recall from \eqref{eq:sextic} that
\begin{align}
    f_x =& -2(\phi_1'(x) + \phi_2'(x)) y^2 + 2 (\phi_1(x) - \phi_2(x)) (\phi_1'(x) - \phi_2'(x)), \label{eq:fx}\\
    f_y =& 4 y^3 - 4 (\phi_1(x) + \phi_2(x))y. \label{eq:fy}
\end{align}

\begin{lemma}\label{lem:sing_y_neq_0}
    With notation as above, there is no singular point on $\tilde{C}$ of the form $(x: y :1)$ with $y \neq 0$.
    Moreover, if $(x:0:1)$ is a singular point on $\tilde{C}$, then it has multiplicity $2$.
\end{lemma}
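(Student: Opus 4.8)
The plan is to use the explicit formulas \eqref{eq:fx} and \eqref{eq:fy} for $f_x$ and $f_y$, together with the defining equation $f=0$, and analyze the system on the affine locus. Suppose $(x,y)$ is an affine singular point with $y\neq 0$. From $f_y=4y^3-4(\phi_1+\phi_2)y=0$ and $y\neq 0$ we get $y^2=\phi_1(x)+\phi_2(x)$. Substituting this into $f=y^4-2(\phi_1+\phi_2)y^2+(\phi_1-\phi_2)^2$ gives $f=(\phi_1+\phi_2)^2-2(\phi_1+\phi_2)^2+(\phi_1-\phi_2)^2=-(\phi_1+\phi_2)^2+(\phi_1-\phi_2)^2=-4\phi_1\phi_2$, so $f=0$ forces $\phi_1(x)\phi_2(x)=0$. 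By symmetry we may assume $\phi_1(x)=0$. Then $y^2=\phi_2(x)$ and, since $(x,y)$ lies on $C$, the point pulls back to $(x,0,\pm y)$ on $C_1\times_{\mathbb{P}^1}C_2$; but I will not need that — instead I substitute $\phi_1(x)=0$ into \eqref{eq:fx}.

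The key step is then to exploit $f_x=0$. With $\phi_1(x)=0$, equation \eqref{eq:fx} becomes $f_x=-2(\phi_1'(x)+\phi_2'(x))y^2-2\phi_2(x)(\phi_1'(x)-\phi_2'(x))$. Using $y^2=\phi_1(x)+\phi_2(x)=\phi_2(x)$ (since $\phi_1(x)=0$), this simplifies to $f_x=-2\phi_2(x)\big[(\phi_1'(x)+\phi_2'(x))+(\phi_1'(x)-\phi_2'(x))\big]=-4\phi_2(x)\phi_1'(x)$. Since $y\neq 0$ we have $\phi_2(x)=y^2\neq 0$, so $f_x=0$ forces $\phi_1'(x)=0$. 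But $\phi_1(x)=\phi_1'(x)=0$ means $x$ is a multiple root of $\phi_1$, contradicting the hypothesis that $\alpha_1,\alpha_2,\alpha_3,\alpha_4$ are pairwise distinct. (The case $\phi_2(x)=0$ is handled identically by swapping the roles of $\phi_1$ and $\phi_2$ and using $\beta_1,\dots,\beta_4$ distinct.) This proves there is no singular point $(x:y:1)$ with $y\neq 0$.

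For the multiplicity claim, suppose $(x_0:0:1)$ is a singular point. I compute the second-order partials at $y=0$. Since $f$ is even in $y$, the relevant Hessian entry is $f_{yy}(x_0,0)=12\cdot 0^2-4(\phi_1(x_0)+\phi_2(x_0))=-4(\phi_1(x_0)+\phi_2(x_0))$. If this is nonzero, the multiplicity is exactly $2$ and we are done. If $\phi_1(x_0)+\phi_2(x_0)=0$, I claim we still get multiplicity $2$ via another second partial: from $f=y^4-2(\phi_1+\phi_2)y^2+(\phi_1-\phi_2)^2$ one has $f(x,0)=(\phi_1(x)-\phi_2(x))^2$, so $f_{xx}(x_0,0)=2\big((\phi_1'(x_0)-\phi_2'(x_0))^2+(\phi_1(x_0)-\phi_2(x_0))(\phi_1''(x_0)-\phi_2''(x_0))\big)$; I will argue that the vanishing of all first- and second-order partials would force $x_0$ to be a common double root of $\phi_1-\phi_2$ and to satisfy $\phi_1'(x_0)=\phi_2'(x_0)$ together with $\phi_1(x_0)=-\phi_2(x_0)$, hence $\phi_1(x_0)=\phi_2(x_0)=0$ and $\phi_1'(x_0)=\phi_2'(x_0)$, and then a short argument (again using distinctness of the $\alpha_i$, resp.\ $\beta_j$, or that $C_1,C_2$ share no ramification point, i.e., $\phi_1$ and $\phi_2$ have no common root) yields a contradiction. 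The main obstacle is this last sub-case: ruling out multiplicity $\geq 3$ requires carefully combining $f(x_0,0)=0$, $f_x(x_0,0)=0$, $f_{xx}(x_0,0)=0$ and the mixed derivative $f_{xy}(x_0,0)=0$ (which holds automatically by parity) with the no-common-ramification-point hypothesis; I expect the cleanest route is to note $f(x,0)=(\phi_1-\phi_2)^2$ is a perfect square of a cubic, so a singular point of $f$ on $y=0$ lies at a root of $\phi_1-\phi_2$, and then a root of multiplicity $\ge 3$ of $f$ at $(x_0,0)$ would need $\phi_1(x_0)+\phi_2(x_0)=0$ as well, which combined with $\phi_1(x_0)=\phi_2(x_0)$ (forced by $x_0$ being a root of $\phi_1-\phi_2$ of even order in $f(x,0)$) gives $\phi_1(x_0)=\phi_2(x_0)=0$, i.e.\ a common ramification point — excluded.
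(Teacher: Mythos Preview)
Your argument is correct and follows essentially the same route as the paper. For the first part, the paper avoids your case split by observing directly that substituting $y^2=\phi_1+\phi_2$ into \eqref{eq:fx} gives $f_x=-4(\phi_1'\phi_2+\phi_1\phi_2')=-4(\phi_1\phi_2)'$, so $\phi_1\phi_2$ and $(\phi_1\phi_2)'$ both vanish at $x$, contradicting that $\phi_1\phi_2$ is squarefree; your WLOG split reaches the same contradiction. For the multiplicity claim, you reach the right conclusion but overwork it: the paper simply notes that multiplicity $\geq 3$ forces $f_{yy}(x_0,0)=-4(\phi_1(x_0)+\phi_2(x_0))=0$, and since $(x_0:0:1)\in\tilde{C}$ already gives $f(x_0,0)=(\phi_1(x_0)-\phi_2(x_0))^2=0$, one immediately gets $\phi_1(x_0)=\phi_2(x_0)=0$, a forbidden common ramification point. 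Your detour through $f_{xx}$ is unnecessary.
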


\begin{proof}
Assume for a contradiction that $\tilde{C}$ has a singular point $(x:y:1)$ with $y \neq 0$.
By \eqref{eq:fy}, we have $y^2 = \phi_1(x) + \phi_2(x)$.
Substituting this into $y^2$ in \eqref{eq:sextic} and \eqref{eq:fx}, we obtain
$f = - 4\phi_1(x) \phi_2(x) = 0$ and
\[
f_x = -4 (\phi_1'(x) \phi_2(x) + \phi_1(x) \phi_2'(x)) = -4 (\phi_1(x)\phi_2(x))'=0.
\]
This contradicts that $\phi_1 \phi_2$ has no double root.

As for the assertion on the multiplicity, assume for a contradiction that $(x:0:1)$ is a singular point on $\tilde{C}$ with multiplicity $\geq 3$.
Then it follows from $f_{yy} = 12 y^2 - 4(\phi_1 (x) + \phi_2 (x)) = 0$ together with $f(x,0) = (\phi_1(x)  -\phi_2(x))^2$ that $\phi_1$ and $\phi_2$ have a common root, which contradicts our assumption that $\alpha_i$'s and $\beta_j$'s are mutually distinct.

\if 0
    Assume for a contradiction that $\tilde{C}$ has a singularity $(x:y:1)$ with $y \neq 0$.
    It follows from $F_y(x,y,1) = 0$ and $c_{04} =1$ that
    \[
    y^2 = - \frac{1}{2}(c_{42} x^4 + c_{32} x^3 + c_{22}x^2 + c_{12}x + c_{02}).
    \]
    Replacing $y^2$ in $F_x(x,y,1)$ and $F_z(x.y,1)$ by this, we obtain two equations $g_1(x) = 0$ and $g_2(x) = 0$ with only one variable $x$, where
    \[
    \begin{aligned}
    g_1 := & 2 F_x(x,y,1)\\
    =&-64 x^7 + 28 c_{32}x^6 + (12 c_{60} - 3 c_{32}^2 + 24 c_{22}) x^5\\
    & + (10 c_{50} - 5 c_{32} c_{22} +  20 c_{12}) x^4 - (4 c_{32} c_{12} - 8 c_{40} + 2 c_{22}^2 - 16 c_{02}) x^3\\
    & - (3 c_{32} c_{02} + 3 c_{22}c_{12} - 6 c_{30})x^2 - (2c_{22}c_{02} + c_{12}^2 - 4c_{20})x - c_{12}c_{02} + 2c_{10},\\
    g_2 := & 2 F_z(x,y,1)\\
    =& 16 x^8 - 4 c_{32} x^7 + (2 c_{50} - c_{32} c_{22} + 4 c_{12}) x^5\\
    & - (2 c_{32}c_{12} - 4 c_{40} + c_{22}^2 - 8 c_{02}) x^4 - (3 c_{32}c_{02} + 3 c_{22}c_{12} - 6 c_{30}) x^3 \\
    & - (4 c_{22}c_{02} + 2 c_{12}^2 - 8 c_{20}) x^2  - (5 c_{12}c_{02} - 10 c_{10}) x - 3 c_{02}^2 + 12 c_{00},\\
    \end{aligned}
    \]
    However, a computer calculation shows
    \[
    \mathrm{Res}_x(g_1,g_2) = - 2^{52} \cdot 3^7 \cdot \prod_{i \neq j}(\alpha_i - \alpha_j)^2 \cdot \prod_{1\leq i,j\leq 4} (\alpha_i - \beta_j)^2 \cdot \prod_{i \neq j}(\beta_i - \beta_j)^2
    \]
    with $(\alpha_1,\alpha_2,\alpha_3,\alpha_4) = (0,1,-1,\alpha)$.
    By our assumption that $(\alpha,\beta_1,\beta_2,\beta_3,\beta_4)$ is of Howe type, the resultant $\mathrm{Res}_x(g_1,g_2)$ is not zero, whence $g_1$ and $g_2$ have no common root, a contradiction.
    
    
    
    
    
    
    
\fi
\end{proof}



By Lemma \ref{lem:sing_y_neq_0}, it suffices to consider singular points of the form $(x:0:1)$.
For $y=0$, it follows from $\mathrm{deg}_y F \geq 2$ that $F_y (x,0,1)=0$, and we have
\[
\begin{aligned}
    F (x,0,z) =& z^6 (\phi_1 (x/z) - \phi_2(x/z))^2\\
    =& ( (\sigma_1 - \tau_1) x^3 - (\sigma_2 - \tau_2) x^2z + (\sigma_3 - \tau_3) x z^2- (\sigma_4 - \tau_4)z^3)^2 
\end{aligned}
\]
from \eqref{eq:sextic}.
Since $F(x,0,1) =f(x,0)$ and $\frac{d}{dx}(F(x,0,1)) = F_x(x,0,1)$, the number of affine singular points is equal to that of common roots of $h_1^2$ and $(h_1^2)' = 2 h_1 h_1'$, where
\[
\begin{aligned}
    h_1 :=& -(\phi_1 - \phi_2) = (\sigma_1 - \tau_1)x^3 - (\sigma_2 - \tau_2) x^2 + (\sigma_3 - \tau_3) x - (\sigma_4-\tau_4),\\
    h_1' :=& \frac{d}{dx}h_1 = -(\phi_1' - \phi_2') = 3(\sigma_1 - \tau_1)x^2 - 2(\sigma_2 - \tau_2) x + (\sigma_3 - \tau_3).
\end{aligned}
\]
\if 0
\[
F (x,0,z) = c_{60} x^6 + c_{50} x^5 z+ c_{40}x^4 z^2 + c_{30}x^3 z^3 + c_{20}x^2 z^4 + c_{10}x z^5 + c_{00}z^6.
\]
Here, putting
    \begin{align}
        d_0 = \alpha - \tau_1, \qquad d_1 = \tau_2 + 1,\qquad \textcolor{red}{d_2 =  -\alpha - \tau_3},\qquad d_3 = \tau_4, \label{eq:d}
    \end{align}
we can write 
\[
\begin{array}{lclcl}
     c_{60} = d_0^2, & & c_{50} =2 d_0 d_1, & & c_{40} = 2 d_0 \textcolor{red}{d_2} + d_1^2, \\
     c_{30} = 2 d_0 d_3 +2d_1 \textcolor{red}{d_2}, & & c_{20} = \textcolor{red}{d_2^2}+2d_1d_3, & & c_{10}=2 \textcolor{red}{d_2} d_3 ,\\
     c_{00} = d_3^2,
\end{array}
\]
whence
\[
F (x,0,z) = (d_0x^3 + d_1 x^2 z + \textcolor{red}{d_2} x z^2 + d_3 z^3)^2.
\]
It follows from $F_x (x,0,z) = \frac{d}{dx} (F(x,0,z))$ and $F_z(x,0,z) = \frac{d}{dz} (F(x,0,z))$ that
\[
\begin{aligned}
    F_x (x,0,z) = 2 H_x (x,z) H (x,z),\\
    F_z (x,0,z) = 2 H_z (x,z) H (x,z),
\end{aligned}
\]
where we set $H (x,z) := d_0x^3 + d_1 x^2 z +\textcolor{red}{d_2} x z^2 + d_3 z^3$.
\fi
\if 0
\[
\begin{aligned}
F_x(x,0,1) =& 6 c_{60} x^5 + 5 c_{50} x^4  + 4 c_{40} x^3  + 3 c_{30} x^2  + 2 c_{20} x + c_{10},\\
F_z(x,0,1)=& c_{50} x^5 + 2c_{40} x^4 + 3 c_{30} x^3  + 4 c_{20} x^2 + 5 c_{10} x + 6 c_{00}.
\end{aligned}
\]
Putting
    \begin{align}
        d_0 = \alpha - \tau_1, \qquad d_1 = \tau_2 + 1,\qquad d_2 =  \tau_3 + \tau_1,\qquad d_3 = \tau_4, \label{eq:d}
    \end{align}
we can write 
\[
\begin{array}{lcl}
     c_{60} = d_0^2, & & c_{50} =2 d_0 d_1,\\
     c_{40} = -2 d_0 (d_0 + d_2) + d_1^2, & & c_{30} = 2 d_0 d_3 -2 d_0 d_1 - 2d_1 d_2, \\
     c_{20} = d_0 (d_0+2d_2)+d_2^2+2d_1d_3, & & c_{10}=-2 d_3 (d_0+d_2),\\
     c_{00} = d_3^2,
\end{array}
\]
whence $F_x(x,0,1)$ and $F_z(x,0,1)$ are factored respectively as $F_1(x)$ and $F_2(x)$ given in Subsection \ref{subsec:common}.
\fi
\if 0
Putting
\[
\begin{aligned}
    h_1 :=& H_x (x,1) = d_0x^3 + d_1 x^2 +\textcolor{red}{d_2} x + d_3,\\
    h_2 :=& H_z (x,1) = d_1 x^2 + 2\textcolor{red}{d_2}x + 3d_3,
\end{aligned}
\]
as in \eqref{eq:h1} and \eqref{eq:h2}, we obtain the following proposition:
\fi
From this, we obtain the following proposition:
\begin{proposition}\label{prop:sing1}
    With notation as above, assume that $\sigma_1 - \tau_1 \neq 0$.
    \begin{enumerate}
        \item[{\rm (I-1)}] If $\mathrm{Res}_x \left( h_1, \frac{d}{dx} h_1 \right)  \neq 0$, then $\tilde{C}$ has exactly $3$ affine singular points.
        In this case, we conclude $\# \mathrm{Sing}(\tilde{C})=4$ and
        \[
        \mathrm{Sing}(\tilde{C})=\{(0:1:0), (\xi_1:0:1), (\xi_2:0:1), (\xi_3:0:1)\}
        \]
        for the $3$ simple roots $\xi_1$, $\xi_2$, and $\xi_3$ of $h_1$.
        \item[{\rm (I-2)}] If $\mathrm{Res}_x \left( h_1, \frac{d}{dx} h_1 \right)  = 0$ and $\mathrm{Res}_x \left( \frac{d}{dx} h_1, \frac{d^2}{dx^2} h_1  \right)  \neq 0$, then $\tilde{C}$ has exactly $2$ affine singular points.
        In this case, we conclude $\# \mathrm{Sing}(\tilde{C})=3$ and
        \[
        \mathrm{Sing}(\tilde{C})=\{(0:1:0), (\xi_1:0:1), (\xi_2:0:1) \}
        \]
        for the double root $\xi_1$ and the simple root $\xi_2$ of $h_1$.
        \item[{\rm (I-3)}] $\tilde{C}$ has a unique singular point of the form $(x:0:1)$ if and only if $\mathrm{Res}_x ( h_1, \frac{d}{dx} h_1 ) = \mathrm{Res}_x ( \frac{d}{dx} h_1, \frac{d^2}{dx^2} h_1 ) = 0$.
        In this case, such the unique singularity is $( \xi :0:1)$ with $\xi := (\sigma_2-\tau_2)/3(\sigma_1-\tau_1)$, and hence we conclude
        \[
        \mathrm{Sing}(\tilde{C})= \{ (0:1:0), ( \xi :0:1) \}.
        \]
    \end{enumerate}     
    In each of the three cases, each singularity has multiplicity $2$ by Lemma \ref{lem:sing_y_neq_0}.
\end{proposition}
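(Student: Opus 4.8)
The plan is to reduce the entire statement to an elementary analysis of the root pattern of the cubic $h_1 = (\sigma_1-\tau_1)x^3 - (\sigma_2-\tau_2)x^2 + (\sigma_3-\tau_3)x - (\sigma_4-\tau_4)$. By the discussion immediately preceding the proposition, together with $f(x,0) = (\phi_1-\phi_2)^2 = h_1^2$ from \eqref{eq:sextic}, the affine singular points of $\tilde{C}$ are exactly the points $(\xi:0:1)$ with $\xi$ a root of $h_1$: a value of $x$ that is a common zero of $h_1^2$ and $(h_1^2)' = 2h_1h_1'$ is simply a zero of $h_1$, and one checks directly that $F_z(x,0,1) = 2h_1(x)\bigl(3h_1(x) - xh_1'(x)\bigr)$ vanishes there as well, so no further condition is imposed. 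Since $\sigma_1 - \tau_1 \neq 0$ by hypothesis, $h_1$ is a genuine cubic, hence has $3$, $2$, or $1$ distinct roots according as its multiplicity pattern is $(1,1,1)$, $(2,1)$, or $(3)$; and $h_1'$ is a genuine quadratic (the characteristic being $0$ or $\geq 5$). On the other hand, by Lemma~\ref{lem:sing_inf} the hypothesis $\sigma_1 \neq \tau_1$ (equivalently $c_{60}\neq 0$) forces $(1:0:0)\notin\mathrm{Sing}(\tilde{C})$, so $(0:1:0)$ is the unique singular point at infinity, automatically distinct from the affine ones. Thus $\#\mathrm{Sing}(\tilde{C})$ equals the number of distinct roots of $h_1$ plus $1$.

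Next I would set up the dictionary between the resultant conditions and the root pattern of $h_1$. The pattern is $(1,1,1)$, i.e. $h_1$ is separable, exactly when $\mathrm{Res}_x(h_1,h_1')\neq 0$; this is case (I-1), with three simple roots $\xi_1,\xi_2,\xi_3$ and the count $\#\mathrm{Sing}(\tilde{C}) = 3+1 = 4$. When $\mathrm{Res}_x(h_1,h_1') = 0$, the cubic $h_1$ has a repeated root, and the one point requiring an argument is that $h_1$ has a \emph{triple} root if and only if $\mathrm{Res}_x(h_1',h_1'') = 0$. One direction is immediate: a triple root of $h_1$ is a double root of the quadratic $h_1'$, making $\gcd(h_1',h_1'')$ nonconstant. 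For the converse I would argue that if $\mathrm{Res}_x(h_1',h_1'') = 0$ then the quadratic $h_1'$ has a double root, necessarily $\xi = (\sigma_2-\tau_2)/\bigl(3(\sigma_1-\tau_1)\bigr)$; since $\mathrm{Res}_x(h_1,h_1') = 0$ the cubic $h_1$ shares a root with $h_1'$, which can only be $\xi$, so $h_1(\xi) = h_1'(\xi) = 0$, and a one-line substitution gives $h_1''(\xi) = 6(\sigma_1-\tau_1)\xi - 2(\sigma_2-\tau_2) = 0$, so $\xi$ is a triple root. This short verification is the only genuinely computational step and, I expect, the only real obstacle.

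With the dictionary in hand, the remaining cases assemble immediately. Under $\mathrm{Res}_x(h_1,h_1') = 0$, the pattern is $(2,1)$ precisely when $\mathrm{Res}_x(h_1',h_1'')\neq 0$ — case (I-2), with a double root $\xi_1$ and a simple root $\xi_2$, giving $\#\mathrm{Sing}(\tilde{C}) = 2+1 = 3$ — and the pattern is $(3)$ precisely when also $\mathrm{Res}_x(h_1',h_1'') = 0$ — case (I-3), with the single root $\xi = (\sigma_2-\tau_2)/\bigl(3(\sigma_1-\tau_1)\bigr)$ identified above, giving $\#\mathrm{Sing}(\tilde{C}) = 1+1 = 2$. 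The ``if and only if'' in (I-3) then follows at once: $\tilde{C}$ has a unique singular point of the form $(x:0:1)$ iff $h_1$ has a unique root iff $h_1$ has a triple root iff both resultants vanish. In each case the explicit description of $\mathrm{Sing}(\tilde{C})$ is obtained by adjoining the point $(0:1:0)$, and the multiplicity-$2$ claim for every listed point is already supplied by Lemma~\ref{lem:sing_inf} for $(0:1:0)$ and by Lemma~\ref{lem:sing_y_neq_0} for the points $(\xi_i:0:1)$. As a consistency check, all three counts respect the bound $\#\mathrm{Sing}(\tilde{C})\le 5$ coming from \eqref{inequality:genus-multiplicity}.
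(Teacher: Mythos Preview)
Your proof is correct and follows essentially the same line as the paper, which in fact states the proposition without a separate proof and treats it as an immediate consequence of the preceding paragraph (where $F(x,0,1)=h_1^2$ and $F_x(x,0,1)=(h_1^2)'$ are computed). You simply make explicit what the paper leaves implicit: the identification of affine singular points with roots of $h_1$, the case split on the root pattern of the cubic, and the resultant dictionary. Your direct verification of $F_z(x,0,1)=2h_1(3h_1-xh_1')$ is correct but redundant, since Euler's relation (already invoked earlier in the paper) gives $F_z=6F-xF_x-yF_y$ at $z=1$.
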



\if 0
\begin{lemma}
    If $\alpha_4 = \beta_1 + \beta_2 + \beta_3 + \beta_4$, we have the following:
    \[
    \begin{aligned}
        c_{40} =& (\beta_1 \beta_2 + \beta_1 \beta_3 + \beta_1 \beta_4 + \beta_2 \beta_3 + \beta_2 \beta_4 + \beta_3 \beta_4 + 1)^2\\
        c_{30} = & -2 (\beta_1 \beta_2 + \beta_1 \beta_3 + \beta_1 \beta_4 + \beta_2 \beta_3 + \beta_2 \beta_4 + \beta_3 \beta_4 + 1)\\
        & \cdot (\beta_1 \beta_2 \beta_3 + \beta_1 \beta_2 \beta_4 + \beta_1 \beta_3 \beta_4 + \beta_2 \beta_3 \beta_4 + \beta_1 + \beta_2 + \beta_3 + \beta_4)\\
        c_{10} =& - 2 \beta_1 \beta_2 \beta_3 \beta_4 (\beta_1 \beta_2 \beta_3 + \beta_1 \beta_2 \beta_4 + \beta_1 \beta_3 \beta_4 + \beta_2 \beta_3 \beta_4 + \beta_1 + \beta_2 + \beta_3 + \beta_4)
    \end{aligned}
    \]
    \[
    \begin{aligned}
       c_{20}= & \beta_1^2 \beta_2^2 \beta_3^2 + 4 \beta_1^2 \beta_2^2  \beta_3  \beta_4 +  \beta_1^2  \beta_2^2  \beta_4^2 + 4  \beta_1^2  \beta_2  \beta_3^2  \beta_4 +  4  \beta_1^2  \beta_2  \beta_3  \beta_4^2 + 2  \beta_1^2  \beta_2  \beta_3 \\
        & + 2  \beta_1^2  \beta_2  \beta_4 +  \beta_1^2  \beta_3^2  \beta_4^2 + 2  \beta_1^2  \beta_3  \beta_4 +  \beta_1^2 + 4  \beta_1  \beta_2^2  \beta_3^2  \beta_4 + 4  \beta_1  \beta_2^2  \beta_3  \beta_4^2 +
        2  \beta_1  \beta_2^2  \beta_3 \\
        & + 2  \beta_1  \beta_2^2  \beta_4 + 4  \beta_1  \beta_2  \beta_3^2  \beta_4^2 + 2  \beta_1  \beta_2  \beta_3^2 + 10  \beta_1  \beta_2  \beta_3  \beta_4 + 2  \beta_1  \beta_2  \beta_4^2 + 2  \beta_1  \beta_2 \\
        & + 2  \beta_1  \beta_3^2  \beta_4 + 2  \beta_1  \beta_3  \beta_4^2 + 2  \beta_1  \beta_3 + 2  \beta_1  \beta_4 +  \beta_2^2  \beta_3^2  \beta_4^2 + 2  \beta_2^2  \beta_3  \beta_4 +  \beta_2^2 + 2  \beta_2  \beta_3^2  \beta_4\\
        & + 2  \beta_2  \beta_3  \beta_4^2 + 2  \beta_2  \beta_3 + 2  \beta_2  \beta_4 +  \beta_3^2 + 2  \beta_3  \beta_4 +  \beta_4^2\\
        =& (\beta_1 \beta_2 \beta_3 + \beta_1 \beta_2 \beta_4 + \beta_1 \beta_3 \beta_4 + \beta_2 \beta_3 \beta_4 + \beta_1 + \beta_2 + \beta_3 + \beta_4)^2 \\
        & + 2 \beta_1 \beta_2 \beta_3 \beta_4 (\beta_1 \beta_2 + \beta_1 \beta_3 + \beta_1 \beta_4 + \beta_2 \beta_3 + \beta_2 \beta_4 + \beta_3 \beta_4 + 1)
    \end{aligned}
    \]
\end{lemma}

Consider the case where $\tilde{C}$ has exactly $2$ singular point at infinity, namely $\alpha_4 = \beta_1 + \beta_2 + \beta_3 + \beta_4$.
Then, by putting
\[
    \begin{aligned}
        d_1 =& \beta_1 \beta_2 + \beta_1 \beta_3 + \beta_1 \beta_4 + \beta_2 \beta_3 + \beta_2 \beta_4 + \beta_3 \beta_4 + 1,\\
        d_2 = & \beta_1 \beta_2 \beta_3 + \beta_1 \beta_2 \beta_4 + \beta_1 \beta_3 \beta_4 + \beta_2 \beta_3 \beta_4 + \beta_1 + \beta_2 + \beta_3 + \beta_4,\\
        d_3 =& \beta_1 \beta_2 \beta_3 \beta_4,
    \end{aligned}
\]
\fi
Next, we consider the case where $\sigma_1 - \tau_1 = 0$; in this case, it follows from Lemma \ref{lem:sing_inf} that $\mathrm{Sing}(\tilde{C}) \supset \{ (0:1:0), (0:0:1)\}$.
Dividing the cases into $\sigma_2-\tau_2 \neq 0$ or $\sigma_2-\tau_2 = 0$, we obtain the following proposition:

\begin{proposition}\label{prop:sing2}
With notation as above, assume $\sigma_1 = \tau_1$ and $\sigma_2-\tau_2 \neq 0$.
Then we have the following:
\begin{enumerate}
    \item[{\rm (II-1)}] If $\mathrm{Res}_x(h_1, \frac{d}{dx}h_1) \neq 0$, then $\tilde{C}$ has exactly $2$ singularities of the form $(\xi:0:1)$, which are given by the distinct $2$ roots $\xi$ of $h_1$.
    In this case, we conclude $\# \mathrm{Sing}(\tilde{C})=4$.
    \item[{\rm (II-2)}] If $\mathrm{Res}_x(h_1, \frac{d}{dx}h_1) = 0$, then $\tilde{C}$ has a unique singular point of the form $(\xi:0:1)$, which is given by $\xi = (\sigma_3-\tau_3)/2(\sigma_2-\tau_2)$.
    In this case, we conclude $\# \mathrm{Sing}(\tilde{C})=3$.
\end{enumerate}
Moreover, supposing $\sigma_1 = \tau_1$ and $\sigma_2=\tau_2$, we have the following:
\begin{enumerate}
    \item[{\rm (II-3)}] If $\sigma_3 - \tau_3 \neq 0$, then $\tilde{C}$ has a unique singular point of the form $(\xi:0:1)$, which is given by $\xi = (\sigma_4-\tau_4)/(\sigma_3-\tau_3)$.
    In this case, we conclude $\# \mathrm{Sing}(\tilde{C})=3$ and $\mathrm{Sing}(\tilde{C}) = \{ (\xi:0:1), (0:1:0), (1:0:0) \}$.
    \item[{\rm (II-4)}] If $\sigma_3= \tau_3$, then $\tilde{C}$ has no singular point of the form $(x:0:1)$, whence we conclude $\# \mathrm{Sing}(\tilde{C})=2$ and $\mathrm{Sing}(\tilde{C}) = \{ (0:1:0), (1:0:0) \}$.
\end{enumerate}
In each of the above three cases, each singularity has multiplicity $2$.
Note also that, in the case (II-4), we used $\sigma_4 \neq \tau_4$ which comes from that $\alpha_i$'s and $\beta_j$'s are mutually distinct.
\end{proposition}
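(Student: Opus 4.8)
The plan is to combine Lemmas~\ref{lem:sing_inf} and \ref{lem:sing_y_neq_0} with a direct root count of the (at most quadratic) polynomial $h_1$. First I would record the structural facts that make the proof essentially combinatorial. Since $\sigma_1 = \tau_1$, Lemma~\ref{lem:sing_inf} forces \emph{both} points at infinity $(0:1:0)$ and $(1:0:0)$ to lie on $\mathrm{Sing}(\tilde{C})$, each of multiplicity $2$; and Lemma~\ref{lem:sing_y_neq_0} rules out every singular point of the form $(x:y:1)$ with $y \neq 0$. Hence the only remaining candidates are points $(\xi:0:1)$, and by the discussion preceding Proposition~\ref{prop:sing1} these are exactly the common zeros of $F(x,0,1) = h_1(x)^2$ and $F_x(x,0,1) = 2h_1(x)h_1'(x)$, i.e.\ precisely the roots of $h_1$ (using Euler's relation to get $F_z(x,0,1)=0$ for free once $F=F_x=0$), each of multiplicity $2$ again by Lemma~\ref{lem:sing_y_neq_0}. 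So the whole proposition reduces to counting and locating the roots of $h_1$ under the stated hypotheses.

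Next I would analyze $h_1 = -(\sigma_2-\tau_2)x^2 + (\sigma_3-\tau_3)x - (\sigma_4-\tau_4)$ in the successive regimes. When $\sigma_2 \neq \tau_2$, $h_1$ is a genuine quadratic: if $\mathrm{Res}_x(h_1,\frac{d}{dx}h_1)\neq 0$ it has two distinct roots, giving two distinct affine singularities $(\xi_1:0:1),(\xi_2:0:1)$ (case (II-1)); if the resultant vanishes it has a single double root, which must then also be a zero of the linear polynomial $\frac{d}{dx}h_1 = -2(\sigma_2-\tau_2)x+(\sigma_3-\tau_3)$, whence $\xi = (\sigma_3-\tau_3)/2(\sigma_2-\tau_2)$ (case (II-2)). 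When $\sigma_2 = \tau_2$ but $\sigma_3 \neq \tau_3$, $h_1$ is linear with unique root $\xi = (\sigma_4-\tau_4)/(\sigma_3-\tau_3)$ (case (II-3)). Finally, when $\sigma_2 = \tau_2$ and $\sigma_3 = \tau_3$, we get $h_1 = -(\sigma_4-\tau_4)$; this is the one place where the hypothesis that $\alpha_i$'s and $\beta_j$'s are pairwise distinct is used, since $\sigma_i = \tau_i$ for all $i=1,2,3,4$ would force $\phi_1 = \phi_2$, so $\sigma_4 \neq \tau_4$ and $h_1$ has no root (case (II-4)).

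Finally I would assemble the count: in every case the two singular points at infinity are present, there are no singularities off the line $y=0$, and the affine singularities on $y=0$ are the roots of $h_1$ just computed, namely $2,1,1,0$ in number and pairwise distinct as well as distinct from the points at infinity. This gives $\#\mathrm{Sing}(\tilde C)=4,3,3,2$ in cases (II-1)--(II-4), together with the explicit descriptions $\mathrm{Sing}(\tilde C)=\{(0:1:0),(1:0:0),(\xi_1:0:1),(\xi_2:0:1)\}$, $\{(0:1:0),(1:0:0),(\xi:0:1)\}$, $\{(\xi:0:1),(0:1:0),(1:0:0)\}$, and $\{(0:1:0),(1:0:0)\}$ respectively; the multiplicity-$2$ assertion is immediate from Lemmas~\ref{lem:sing_inf} and \ref{lem:sing_y_neq_0}. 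I do not anticipate a real obstacle: the argument is elementary throughout. The only points needing care are bookkeeping ones --- checking that the resultant condition genuinely separates ``two simple roots'' from ``one double root'' for the quadratic $h_1$, that the double-root and linear-root formulas come out as stated, and that the nonvanishing of $\sigma_4-\tau_4$ in (II-4) is correctly attributed to the distinctness of the branch points rather than to $\sigma_1=\tau_1$ alone.
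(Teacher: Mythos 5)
Your proposal is correct and follows essentially the same route as the paper: the paper's discussion preceding Proposition \ref{prop:sing1} already reduces the affine singular points to the roots of $h_1$ (with Lemma \ref{lem:sing_y_neq_0} excluding $y\neq 0$ and giving multiplicity $2$, and Lemma \ref{lem:sing_inf} supplying both points at infinity when $\sigma_1=\tau_1$), and the proposition is then just the root count of the at-most-quadratic $h_1$ in the four regimes, exactly as you carry out. Your bookkeeping of the double-root and linear-root formulas and of the use of $\sigma_4\neq\tau_4$ in case (II-4) matches the intended argument.
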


Considering results obtained above together, we obtain Theorem \ref{thm:main2}, see Table \ref{table:sing} below for a summary.
With the exception of the loci $\sigma_1 -\tau_1 = 0$ and $\mathrm{Res}_x (h_1, \frac{d}{dx} h_1) = 0$, the point $(\alpha_1, \alpha_2,\alpha_3,\alpha_4, \beta_1,\beta_2,\beta_3,\beta_4)$ produces a plane sextic model $\tilde{C}$ with $4$ double points, $3$ of which are affine, while the other one is at infinity.
Namely, $\tilde{C}$ has such $4$ singularities generically.

\renewcommand{\arraystretch}{1.5}
\begin{center}
    \begin{table}[h]
    \centering
    \caption{The number of singular points on the projective closure $\tilde{C}$ of a plane sextic curve $C:f(x,y)=0$ associated with $(\alpha_1,\alpha_2,\alpha_3,\alpha_4,\beta_1,\beta_2,\beta_3,\beta_4)\in k^8$, where $\alpha_i$'s and $\beta_j$'s are mutually distinct elements.
    For each integer $i$ with $1\leq i \leq 4$, the degree-$i$ elementary symmetric polynomial on $\alpha_1,\alpha_2,\alpha_3,\alpha_4$ and that on $\beta_1,\beta_2,\beta_3,\beta_4$ are denoted by $\sigma_i$ and $\tau_i$ respectively.
    We also set $h_1 := (\sigma_1-\tau_1)x^3 - (\sigma_2 - \tau_2) x^2 + (\sigma_3 - \tau_3) x - (\sigma_4 - \tau_4)$.
    The notation ``$(m,n)$'' means $\tilde{C}$ has $m$ affine singularities $(x:y:1)$ and $n$ singularities $(x:y:0)$ at infinity.
    Each ``Type'' corresponds to one given in Propositions \ref{prop:sing1} and \ref{prop:sing2}.}\label{table:sing}
    \vspace{2mm}
    \begin{tabular}{|c|c|c|c|c|c|}
    \hline
         \multicolumn{3}{|c|}{Equivalent conditions} & {$(m,n)$} & $\# \mathrm{Sing}(\tilde{C})$ & {Type} \\
         \hline
          \multirow{3}{*}{$\sigma_1 \neq \tau_1$} & \multicolumn{2}{c|}{$\mathrm{Res}_x(h_1,h_1') \neq 0$} & $(3,1)$ & $4$ & I-1  \\ \cline{2-6}
          &  \multirow{2}{*}{$\mathrm{Res}_x(h_1,h_1') = 0$} & $\mathrm{Res}_x(h_1',h_1'') \neq 0$ & $(2,1)$ & $3$ & I-2 \\ \cline{3-6}
          &   & $\mathrm{Res}_x(h_1',h_1'') = 0$ & $(1,1)$ & $2$ & I-3 \\ \hline
         \multirow{4}{*}{$\sigma_1=\tau_1$} &  \multirow{2}{*}{$\sigma_2\neq \tau_2$} & $\mathrm{Res}_x(h_1,h_1') \neq 0$ & $(2,2)$ & $4$ & II-1 \\ \cline{3-6}
          &   & $\mathrm{Res}_x(h_1,h_1') = 0$ & $(1,2)$ & $3$ & II-2 \\ \cline{2-6}
          & \multirow{2}{*}{$\sigma_2 = \tau_2$} & $\sigma_3 \neq \tau_3$ & $(1,2)$ & $3$ & II-3  \\ \cline{3-6}
          &  & $\sigma_3 = \tau_3$ & $(0,2)$ & $2$ & II-4 \\ \hline
    \end{tabular}
\end{table}
\end{center}

\begin{remark}
If $\sigma_1 - \tau_1= \sigma_2 - \tau_2 = 0$, then $f$ is simply written as
\begin{equation}\label{eq:simplified1}
\begin{aligned}
    f =& -4 x^4 y^2 + 4\sigma_1 x^3 y^2 +(-4 \sigma_2 x^2y^2 +y^4)  \\
    & +2({\sigma_3} +\tau_3)xy^2 +((\sigma_3-\tau_3)^2x^2 -2 (\sigma_4 + \tau_4) y^2)\\
    & -2 (\sigma_3-\tau_3)(\sigma_4-\tau_4) x + (\sigma_4-\tau_4)^2.\\
\end{aligned}
\end{equation}
Moreover, if $\sigma_1 - \tau_1= \sigma_2 - \tau_2 = \sigma_3 - \tau_3 = 0$, we obtain a more simplified form
\begin{equation}\label{eq:simplified2}
\begin{aligned}
        f =& -4 x^4 y^2 + 4 \sigma_1 x^3y^2 + (-4 \sigma_2 x^2 y^2 + y^4)\\
    &+4\sigma_3 x y^2 -2 (\sigma_4 + \tau_4) y^2 + (\sigma_4-\tau_4)^2.
\end{aligned}
\end{equation}
\end{remark}

\subsection{Concrete examples}

In this subsection, we show a concrete example for each type of singularities.
As noted in Remark \ref{rem:3points}, we may fix $3$ among $\alpha_i$'s and $\beta_j$'s.
For simplicity, we take $(\alpha_1,\alpha_2,\alpha_3) = (0,1,-1)$ and put $\alpha = \alpha_4$ in examples below.

\begin{example}[Type I: $\sigma_1-\tau_1\neq 0$]
    Let $k=\overline{\mathbb{F}}_{31}$, and let $C:f(x,y)=0$ be our sextic curve associated with a point $(\alpha,\beta_1, \beta_2, \beta_3, \beta_4) \in k^5$.
\begin{enumerate}
    \item[(I-1)] For $(\alpha,\beta_1, \beta_2, \beta_3, \beta_4) =(20,28,16,7,27) \in \mathbb{F}_{31}^5$, we have $\sigma_1 - \tau_1  = 4 \neq 0$, and $\mathrm{Res}_x ( h_1, \frac{d}{dx} h_1 ) = 27 \neq 0$.
    In this case, the computed sextic is
    \[
    \begin{aligned}
           f = & (16 x^6 + 27 x^4 y^2) + (22 x^5  + 10 x^3 y^2) +(23 x^4 + 14 x^2 y^2 + y^4)  \\
           & + (13 x^3 + 29xy^2) + (16x^2  + 9y^2) + 10 x + 28,
    \end{aligned}
    \]
    and there are exactly $4$ singular points on the projective closure $\tilde{C}$ of $C$:
    $(24:0:1)$, $(4:0:1)$, $(12:0:1)$, and $(0:1:0)$.
    Each of them is of multiplicity two.
    
    \item[(I-2)] For $(\alpha,\beta_1, \beta_2, \beta_3, \beta_4) =(11,2,13,29,22) \in \mathbb{F}_{31}^5$, one has $\sigma_1-\tau_1  = 7 \neq 0$, and $\mathrm{Res}_x ( h_1, \frac{d}{dx} h_1 ) = 0$, but $\mathrm{Res}_x ( \frac{d}{dx} h_1, \frac{d^2}{dx^2} h_1 ) = 5 \neq 0$.
    In this case, the computed sextic is
    \[
    \begin{aligned}
           f = & (18 x^6 + 27 x^4 y^2 ) + (25 x^5 + 30 x^3 y^2 )  + (24 x^4 + 27 x^2 y^2 + y^4)\\
           & + (20 x^3+ 8 xy^2) + (18x^2 + 25y^2) +30x + 9,
    \end{aligned}
    \]
    and there are exactly $3$ singular points on the projective closure $\tilde{C}$ of $C$:
    $(25:0:1)$, $(7:0:1)$, and $(0:1:0)$.
    Each of them is of multiplicity two.
    
    \item[(I-3)] For $(\alpha,\beta_1, \beta_2, \beta_3, \beta_4) =(7,2,5,8,19) \in \mathbb{F}_{31}^5$, we examine $\sigma_1 - \tau_1  = 4 \neq 0$, and moreover $\mathrm{Res}_x ( h_1, \frac{d}{dx} h_1 ) = \mathrm{Res}_x ( \frac{d}{dx} h_1, \frac{d^2}{dx^2} h_1 ) = 0$.
    Therefore, the computed sextic is
    \[
    \begin{aligned}
            f = & (16 x^6 + 27 x^4 y^2) + (26 x^5  + 20 x^3 y^2) + (26 x^4 + 13 x^2 y^2 + y^4)\\
            & + (18 x^3 + 19 xy^2)+ (24 x^2 + 29 y^2) + 15 x + 1,
    \end{aligned}
    \]
    and there are exactly $2$ singular points on the projective closure $\tilde{C}$ of $C$:
    $(12:0:1)$ and $(0:1:0)$.
    Each of them is of multiplicity two.
    
     
\end{enumerate}
\end{example}

\begin{example}[Type II: $\sigma_1-\tau_1= 0$]
    Let $k=\overline{\mathbb{F}}_{31}$, and let $C:f(x,y)=0$ be our sextic curve associated with a point $(\alpha,\beta_1, \beta_2, \beta_3, \beta_4) \in k^5$.
\begin{enumerate}
    \item[(II-1)] For $(\alpha,\beta_1, \beta_2, \beta_3, \beta_4) =(8,12,26,28,4) \in \mathbb{F}_{31}^5$, one can examine $\sigma_1 = \tau_1$, and moreover $\sigma_2-\tau_2=2 \neq 0$ and $(\sigma_3-\tau_3)^2 -4 (\sigma_2-\tau_2) (\sigma_4-\tau_4) =14 \neq 0$.
    Therefore, the computed sextic is
    \[
    f = 27 x^4 y^2 + x^3 y^2 + (4 x^4 + 8 x^2 y^2+y^4) + (14 x^3  + 6xy^2)  + (23x^2 + 17y^2)  + 13x + 18,
    \]
    and there are exactly $4$ singular points on the projective closure $\tilde{C}$ of $C$:
    $(14:0:1)$, $(23:0:1)$, $(0:1:0)$, and $(1:0:0)$.
    Each of them is of multiplicity two.
    
    \item[(II-2)] For $(\alpha,\beta_1, \beta_2, \beta_3, \beta_4) =(5,2,10,26,29) \in \mathbb{F}_{31}^5$, one can examine $\sigma_1 = \tau_1$, and moreover $\sigma_2-\tau_2=22 \neq 0$ and $(\sigma_3-\tau_3)^2 -4 (\sigma_2-\tau_2) (\sigma_4-\tau_4) =0$.
    Therefore, the computed sextic is
    \[
    f = 27x^4y^2 + 20x^3y^2 + (19x^4 + 17x^2y^2  + y^4)+ (22x^3 + 12xy^2) + (12x^2 + 3y^2)  + 17x + 10,
    \]
    and there are exactly $3$ double points on the projective closure of $\tilde{C}$:
    One is an affine singular point $(25:0:1)$, and the others are $(0:1:0)$ and $(1:0:0)$, which are points at infinity.
     
    \item[(II-3)] Put $(\alpha,\beta_1, \beta_2, \beta_3, \beta_4) =(29,2,7,14,6) \in \mathbb{F}_{31}^5$.
    It follows that $\sigma_1 = \tau_1$, and moreover $\sigma_2=\tau_2$ but $\sigma_3-\tau_3 = 20 \neq 0$.
    The computed sextic is
    \[
    f = 27x^4y^2 + 23x^3y^2 + (4x^2y^2+ y^4 ) + 30xy^2  + (28x^2 + 4y^2) + 13x+ 4
    \]
    as in \eqref{eq:simplified1}, and the projective closure of $\tilde{C}$ has exactly $3$ double points.
    One is an affine singular point $(28:0:1)$, and the others are $(0:1:0)$ and $(1:0:0)$, which are points at infinity.

    \item[(II-4)] For $(\alpha,\beta_1, \beta_2, \beta_3, \beta_4) =(2,8,20,24,12) \in \mathbb{F}_{31}^5$, one can examine $\sigma_1 = \tau_1$, and moreover $\sigma_2-\tau_2=\sigma_3-\tau_3=0$ and $\sigma_4-\tau_4 =17 $.
    Therefore, the computed sextic is
    \[
    f = 27 x^4 y^2 + 8 x^3 y^2 + (4 x^2 y^2 + y^4) + 23 x y^2  + 3 y^2 + 10
    \]
    as in \eqref{eq:simplified2}, and there are exactly $2$ singularities on the projective closure of $\tilde{C}$:
    One is $(0:1:0)$, and the others are $(1: 0 : 0)$.
    Each of them is of multiplicity two.
\end{enumerate}
\end{example}

\section{Concluding remark}\label{sec:conc}

In this paper, we focused on genus-$5$ non-hyperelliptic Howe curves, which are constructed as non-singular curves birational to fiber products of two hyperelliptic curves $C_1$ and $C_2$ of genera $g_1$ and $g_2$ sharing precisely $r$ ramification points in $\mathbb{P}^1$, for $(g_1,g_2,r) = (2,2,4)$ or $(1,1,0)$.
While the former case was treated in \cite{MK23}, we studied the latter case in this paper.
Specifically, we presented an explicit plane sextic model for Howe curves in the case;
we proved that the associated sextic polynomial is absolutely irreducible.
This sextic can be computed easily (in fact, in constant time) once the ramification points of $C_1$ and $C_2$ are specified.
We also determined the possible number of singularities on the sextic together with concrete forms of the singularities, and we found that there are $4$ double points generically.
These together with results in \cite{MK23} imply the existence of a genus-$5$ non-hyperelliptic curve $H$ with $\mathrm{Aut}(H) \supset \mathbf{V}_4$ such that its associated plane sextic has exactly $s$ double points, for any $s \in \{ 2,3,4,5 \}$.

Our sextic model constructed in this paper would be feasible to analyze non-hyperelliptic Howe curves of genus five as plane singular curves, by constructing their function fields.
For example, one can determine whether two such curves are isomorphic to each other or not, which can be applied to enumerating the isomorphism classes of curves (defined over finite fields) such as superspecial ones.
We leave this kind of applications our future work.
Another interesting open problem is to provide representable families for non-hyperelliptic Howe curves of genus $5$ (or more generally non-hyperelliptic curves $H$ of genus $5$ with $\mathrm{Aut}(H) \supset \mathbf{V}_4$), as in \cite{lercier2014parametrizing} (resp.\ \cite{BG}) for genus-$3$ (resp.\ genus-$5$) cases.
As we also noted in Remark \ref{rem:gen}, our method to construct a sextic model in this paper could be extended to the case of Howe curves of other genus associated with two hyperelliptic curves sharing no ramification points, and its formulation is also an interesting problem.

\subsection*{Acknowledgements}
The author thanks Kazuhiro Yokoyama for helpful comments.
This work was supported by JSPS Grant-in-Aid for Young Scientists 20K14301 and 23K12949.

\bibliography{ref}
\bibliographystyle{plain}

\end{document}